\definecolor{bluebg}{rgb}{0.8,0.8,0.97}
\definecolor{graybg}{rgb}{0.9,0.9,0.9}
\newtheorem{lemma}{Lemma}
\newtheorem{theorem}{Theorem}
\newtheorem{proposition}{Proposition}
\newtheorem{corollary}{Corollary}
\newtheorem{assumption}{Assumption}
\theoremstyle{definition}
\newtheorem{definition}{Definition}
\theoremstyle{remark}
\newtheorem{remark}{Remark}
\newmdtheoremenv [backgroundcolor=bluebg, %
innertopmargin =0pt , %
splittopskip = \topskip, %
skipbelow= 6pt, %
skipabove=6pt, %
topline=false,bottomline=false,leftline=false,rightline=false, roundcorner=10pt]{example}{Example}
\newmdtheoremenv [backgroundcolor=graybg, %
innertopmargin =0pt , %
splittopskip = \topskip, %
skipbelow= 6pt, %
skipabove=6pt, %
topline=false,bottomline=false,leftline=false,rightline=false, roundcorner=10pt]{background}{Background Info}
\newcommand{\R}{\mathbb R}
\newcommand{\A}{\mathcal{A}}
\newcommand{\B}{\mathcal B}
\renewcommand{\H}{\mathcal{H}}
\renewcommand{\P}{\mathbb P}
\newcommand{\calE}{\mathcal E}
\newcommand{\calA}{\mathcal A}
\newcommand{\eps}{\varepsilon}
\renewcommand{\d}{\,\mathrm{d}}
\DeclareMathSymbol{\mlq}{\mathord}{operators}{``}
\DeclareMathSymbol{\mrq}{\mathord}{operators}{`'}
\newcommand{\st}{\,:\,}
\newcommand{\dist}{\operatorname{dist}}
\newcommand{\sign}{\operatorname{sign}}
\title{The lion in the attic -- A resolution of the Borel--Kolmogorov paradox}
\author{
Leon Bungert
\thanks{Hausdorff Center for Mathematics, University of Bonn, Endenicher Allee 62, Villa Maria,
53115 Bonn, Germany. Email: \href{mailto:leon.bungert@hcm.uni-bonn.de}{leon.bungert@hcm.uni-bonn.de}} 
\and 
Philipp Wacker
\thanks{Freie Universität Berlin, Institute for Mathematics, Arnimallee 3
14195 Berlin, Germany. Email: \href{mailto:p.wacker@fu-berlin.de}{p.wacker@fu-berlin.de}}
}
\begin{document}
\maketitle
\begin{abstract}
    The Borel--Kolmogorov paradox of conditioning with respect to events of prior probability zero has fascinated students and researchers since its discovery more than 100 years ago. Classical conditioning is only valid with respect to events of positive probability. If we ignore this constraint and condition on such sets, for example events of type $\{Y=y\}$ for a continuously distributed random variable $Y$, almost any probability measure can be chosen as the conditional measure on such sets. There have been numerous descriptions and explanations of the paradox' appearance in the setting of conditioning on a subset of probability zero. However, most treatments don't supply explicit instructions on how to avoid it. We propose to close this gap by defining a version of conditional measure which utilizes the Hausdorff measure. This makes the choice canonical in the sense that it only depends on the geometry of the space, thus removing any ambiguity. We describe the set of possible measures arising in the context of the Borel--Kolmogorov paradox and classify those coinciding with the canonical measure. The objective of this manuscript is to provide a manual for singular conditional probability: We give an explicit explanation in which settings ambiguity arises (and where not) and how to get rid of this ambiguity once and for all by a canonical choice.
    
\textbf{Keywords:} Conditional probability, Hausdorff measure, Borel-Kolmorogorv paradox, Bayesian inverse problems   

\textbf{MSC classes:} 60A10, 62A01, 62F15  
\end{abstract}

\newpage
\tableofcontents

\section{Introduction}
Imagine moving into a new house. 
The realtor shows you all the rooms with exception of the attic, which he explicitly tells you not to enter because of a lion living inside which will attack anyone entering. 
Imagine accepting this and living in the house under this restriction. 
You realize, though, that you need the attic space so you carefully work out the lion's sleeping schedule and tiptoe in and out whenever you need to retrieve things you have stored in the its den. 
Usually that works out just fine and only rarely do you lose a guest when you have failed to inform them about your safety and avoidance protocols.

The concept of conditional probability and expectation is such a lion in the house of the mathematical and statistical sciences and its teeth (most notably its fang, the Borel--Kolmogorov paradox) have taken countless victims. Almost everyone who has studied conditioning knows that ``you have to be careful'', especially when conditioning on singular events. There are multiple ways in which scientists deal with that fear of conditioning. Some, similar to our houseowner, only use conditioning in a specific way such that ambiguity cannot arise. Others try to screen the difficulties with bigger words like ``regular conditional probability'' or conditional expectation with respect to sigma-algebras (but those are solutions for different problems and don't have a bearing on the paradox). The pure probabilist's take on this issue is that it is not of interest at all because conditioning on singular sets is a fool's errand. Nonetheless, conditional densities which are ubiquitous in practice are (supposedly) attempting to do exactly that. 
In the authors' opinion there is no accessible textbook or paper which completely solves the problem wherefore we are trying to close this gap.

The aim of this article is to provide the necessary tools and an easy set of rules for entering the attic anytime and getting out alive.

We want to note that this manuscript can be neither written nor read from a purely probabilistic viewpoint: Probability theory's judgement on singular conditioning is that it is impossible. 
Since this provision is not heeded by researchers in the fields of Bayesian inversion or applied statistics, we need to find a way to untangle the arising confusion. 
For this we need geometrical tools, thereby leaving the domain of probability theory. 
In particular, in this work we will only consider finite-dimensional probability spaces which carry a natural metric. 
This is undoubtedly a long way from the generality of the usual probability space triple, but one which most practitioners will not find unduly restricting for their purposes.

There have been numerous attempts to warn of and explain the Borel--Kolmogorov paradox which goes back to \cite{bertrand1889calcul,borel1909ements,kolomogoroff2013grundbegriffe}.
The most accessible and helpful one is \cite{proschan1998expect}. It gives a good visual explanation of the reasons for the occurence of the paradox (which we adopt here because it is very revealing) but ultimately does not point towards a solution (thus falling into the ``just be careful'' category). The original Borel--Kolmogorov paradox (with a measure on the sphere) is also demonstrated in the well-known textbook \cite{billingsley2008probability}, but again with no complete solution on how to deal with it. 
The probably most ambitious opus is \cite{rao2005conditional} which lays out the fundamental derivation of conditional probability via conditional expectation, points out all major pitfalls including the Borel--Kolmogorov paradox. It then goes on to elaborate on Renyi's axiomatic approach to conditional expectation but finally concludes that this does not solve the problem either. See also \cite{arnold2019conditional} and \cite{pfanzagl1979conditional}. For visually appealing geometric explanations and philosophical justification of the Borel--Kolmogorov paradox, see \cite{rescorla2015some} and \cite{meehan2020borel}. 

For comments on non-standard ways of introducing conditional distributions, we also refer to~\cite{tjur1975constructive,chang1997conditioning,dellacherie1982probabilities}.

An interesting variation on the Borel--Kolmogorov paradox was discussed in \cite{kac1959large}. The authors show that there are again many plausible ways of approximating a singular set. Their resolution is to use physical arguments in order to single out a specific version.

More recently, \cite{gyenis2017conditioning} attempted to resolve the Borel--Kolmogorov paradox by extending an elementary $0$-$1$-measure defined on the singular domain to the whole probability space. Unfortunately, their construction does not work, see the erratum\footnote{http://phil.elte.hu/gyz/counterexample.pdf} which we detail in the appendix of this paper. 
On a fundamental level, the problem is that their proposed functional $q_{\mathcal A}$ is not absolutely continuous with respect to the prior. 

This manuscript at hand is not addressed at those mathematicians/statisticians who have never had any issues with wrapping their head around the concept of conditional expectation. In contrast, it is supposed to help everyone who has been troubled by conditional probability much too long.

\paragraph{Plan of this paper}
Our article is structured in the following way: for readers yet unaware of the lion's fangs, we start with a brief outline of the Borel--Kolmogorov paradox (which we generalize such that a conditional measure on a set of measure zero can be almost anything). The main problem is that different parametrizations (i.e. choices of auxiliary random variables) of the singular event in question lead to different results. We then introduce the notion of \textit{canonically induced measure} and show how this singles out one of the various possible measures arising from conditioning on a set of measure zero. This removes the ambiguity at the heart of the paradox. This canonically induced measure is hard too work with in practice, so we then characterize the equivalence class of auxiliary random variables which in fact coincide with the canonically induced measure, thereby giving a practical rule for concrete computation.
A concluding section connects the Borel--Kolmogorov paradox with the field of Bayesian inversion (or Bayesian statistics). The paradox does not typically appear here, because in this case we do not condition on a singular set, but rather on a singular event of a specific choice of random variable. This removes the ambiguity at the heart of the Borel--Kolmogorov paradox. Nevertheless, it pays off to exactly understand why this is the case.

\section{Guiding example: The paradox in an elementary statistical problem}\label{sec:guidingexample}
We assume\footnote{This example is from \cite{proschan1998expect} and is similar to an earlier computation in \cite{rao1988paradoxes}} that there are two i.i.d. random variables $X,Y\sim N(0,1)$. We interpret $X$ as an unknown parameter and $Y$ as an unknown noise term. We further assume that there is a dependent random variable $Z$ (called ``data'') defined via 
\[Z = \frac{Y}{X},\]
i.e., the data $Z$ is a measurement of $1/X$ corrupted by multiplicative noise $Y$.

Now assume we are given a concrete measurement result $Z=-1$. What information does this give us about the parameter $X$? Interpreted in a Bayesian way, we should take our prior on $X$, which is $N(0,1)$ and update it with the likelihood\footnote{for now, we will formally write things like $\P(Z=-1)$ in order to describe the setting more clearly. Of course we have to substitute this with the density of $Z$ etc., but this will come later.} $\P(Z=-1\vert X)$ in order to obtain the posterior $\P(X\vert Z=-1)$. Now obviously, the event that we condition on has probability zero so we have to depart from a simple application of Bayes' law of the form 
$$\P(X=x\vert Z=-1) = \frac{\P(Z=-1\vert X=x)\cdot \P(X=x)}{\P(Z=-1)}$$ 
because all quantities involved are singular. Hence, we do what we teach our students and substitute the singular probabilities by densities. Here we will always write $f_W(w)$ for the density of a continuous random variable $W$ on $\R$. If there are two random variables $A,B,C$ where $C$ is defined via $C= F(A, B)$ (as in the case of $X, Y, Z$ here) we write $f_{C\vert A}(c\vert a) = f_{C\vert A=a}(c)$ as the density of the random variable $F(a, B)$ (i.e., $A=a$ is fixed and only $B$ accounts for randomness in the new random variable $C\vert (A=a)$).\footnote{We don't further formalize this approach here because it is such a standard method in any textbook about dependent continuous random variables and because we want to show how this leads to problems.}

In our guiding example we have $f_X(x) = \frac{1}{\sqrt{2\pi}}\exp(-x^2/2)$ and $f_Y$ has the same form. The density of $Z\vert (X=x)$ is given by the density of $Y/x$, which is 
$$f_{Z\vert (X=x)}(z) = \frac{1}{\sqrt{2\pi \frac{1}{x^2}}}\exp(-z^2x^2/2)$$ 
for $x\neq 0$ (and undefined else, but $X=0$ is an event of probability $0$). In order to get the marginal distribution of $Z$, we need to integrate over the density of $X$, which, after some calculation, yields $f_Z(z) = \frac{1}{\pi(1+z^2)}$ (unsurprisingly, because this is the density of a Cauchy distribution, which is the result of taking the fraction of two standard Gaussian random variables).

The version of Bayes' theorem for densities now yields
\[f_{X\vert Z}(x\vert z) = \frac{f_{Z\vert X}(z\vert x)\cdot f_X(x)}{f_Z(z)} = \frac{\frac{\vert x\vert }{\sqrt{2\pi}}\exp(-z^2x^2/2)\cdot \frac{1}{\sqrt{2\pi}}\exp(-x^2/2)}{\frac{1}{\pi (1+z^2)}}\]
and in particular 
\[f_{X\vert (Z=-1)}(x) = f_{X\vert Z}(x\vert -1) = \vert x\vert \cdot \exp(-x^2).\]

\paragraph{A different approach} If this calculation is deemed too tedious, one can alternatively utilize that $Z = -1$ is equivalent to $X + Y = 0$ and define another random variable $W = X+Y$. Then $\{Z = -1\} = \{W = 0\}$ (this is correct) and we can equivalently condition on $(W=0)$ (this is incorrect, and this is where we lose an arm to the lion because we were not cautious enough). Now, calculations are easier: 
$f_{W\vert X}(w\vert x) = \frac{1}{\sqrt{2\pi}}\exp(-(w-x)^2/2)$ and $f_W(w) = \frac{1}{\sqrt{4\pi}}\exp(-w^2/4)$. This even follows without marginalizing and is due to $W = N(0,2)$ as a sum of two i.i.d. standard random variables. Then again,
\[f_{X\vert W}(x\vert w) = \frac{f_{W\vert X}(W\vert x)\cdot f_X(x)}{f_W(w)} = \frac{\frac{1}{\sqrt{2\pi}}\exp(-(w-x)^2/2) \cdot \frac{1}{\sqrt{2\pi}}\exp(-x^2/2)}{\frac{1}{\sqrt{4\pi}}\exp(-w^2/4)}\] 
and
\[f_{X\vert W=0}(x) = f_{X\vert W}(x\vert 0) = \frac{1}{\sqrt{\pi}}\exp(-x^2),\]
i.e., $X\vert (W=0)$ is a $N(0, 1/\sqrt 2)$ random variable.

Now, although clearly $\{Z = -1\} = \{W = 0\}$, the conditional densities of $X$ conditioned to either event are completely different (most notably: the former approach assigns very small posterior probability to intervals centered  at $x=0$ whereas the latter approach identifies $x=0$ as the point of highest probability density), as figure \ref{fig:densities} shows.

\begin{figure}[hbtp]
\centering
\includegraphics[width=\textwidth]{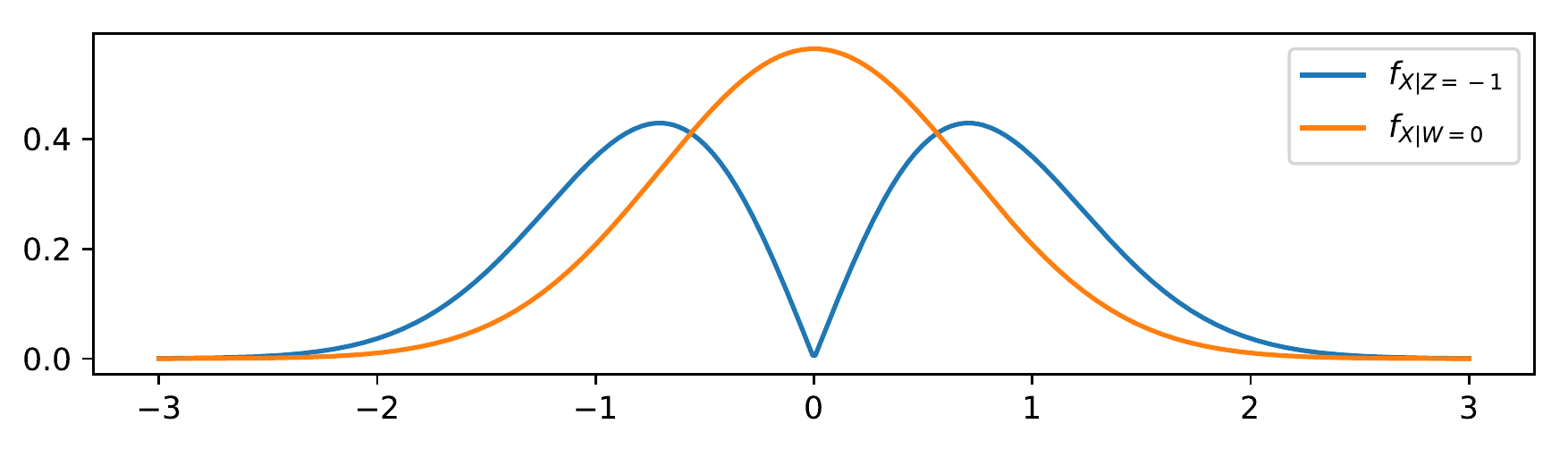}
\caption{Conditional densities $f_{X\vert Z=-1}$ and $f_{X\vert W=0}$.}
\label{fig:densities}
\end{figure}

More pointedly we can say the following: Given $X, Y, W, Z$ as above, the events $\{W = 0\}$ and $\{Z = -1\}$ are equivalent (i.e., they differ only on a set of zero measure\footnote{Note that the fact that $Z$ is not defined for $X=0$ is not relevant for the occurrence of this paradox.}), but our updated belief about $X$ given one of those events, i.e., the distributions $X\vert \{W = 0\}$ and $X\vert \{Z = -1\}$ differ strongly. In particular, the former identifies $X=0$ as the point of highest probability density. The latter says the opposite.

We can produce a countless number of different results by, e.g., defining $R := (X+Y)\cdot \exp((X-Y)^2)$ and conditioning on $R=0$.

We call this the (extended) Borel--Kolmogorov paradox: The original version of this paradox was formulated for a uniform measure on the sphere with a specific parametrization via spherical coordinates. There, the conditional measure on the equator is different from the conditional measure on the ``Greenwich meridian'', which is paradoxical as both sets are equivalent to each other via rotation. For a discussion see for example \cite{billingsley2008probability}. But as the manifold setting needlessly complicates the mathematics -- obfuscating the real issue -- and the same problems lead to the same paradox in flat space, we will stick with Euclidean setting here, with the generalization to manifolds being a cumbersome corollary (not included in this manuscript).

\paragraph{When the Borel--Kolmogorov paradox happens and how we can we avoid it}

In brevity, the Borel--Kolmogorov paradox arises when one wants to condition on a \textit{set of zero measure} but uses \textit{different parametrizations} of this subset via random variables.

It is \textit{not directly relevant} in the case where we are interested in directly conditioning on a singular event of the form $W=w$ for some random variable $W$ and value $w$.

One (pragmatic) approach to avoiding it is to always specify the parametrization, i.e., don't ask for ``what is the distribution of $X$ given $X=Y$?'', but rather ``What is the distribution of $X$ given $W=0$ for $W:= X-Y$?''. This is of course not helpful if the initial task is to ``condition on $M$'' where $M$ is a singular set, and if there is no specific parametrization given. The paradox tells us that different choices of auxiliary level set random variables (i.e. random variables $S$ such that $; = \{S=s\}$) lead to different results. For this situation we need to be able to single out a measure that gives us the ``canonical'' answer.

In this manuscript we argue that there is such a ``canonical'' choice of conditional measure on the subset without the need for additional random variables, if the probability space is Euclidean (and thus carries a metric). We will call this the canonically induced measure and it is purely defined via the metric of the probability space and the underlying probability measure. We will then specify the class of parametrizations of the subset we condition on which recover the canonically induced measure. We also discuss why the paradox is not directly relevant in the Bayesian setting, i.e. if we condition on a specific variable.

Very briefly, the statements of this manuscript can be summarized as follows: Let $\mu$ be a probability measure on $\R^n$ and $M$ a smooth manifold of dimension $k<n$. This manifold arises as the level set $\{\varphi = s\}$ for some smooth function $\varphi :M\to \R$ which can be interpreted as a random variable. Hence, conditioning on $M$ can be interpreted as conditioning on the event $\{\varphi = s\}$, but as there are infinitely many possible mappings $\varphi$ which have this property, there is considerable ambiguity which leads to the Borel--Kolmogorov paradox. We call this type of conditioning \textit{fan measure}.

There is a \textit{canonically induced measure} $\mu_M$ on $M$ directly resulting from the metric structure of $\R^n$ via the Hausdorff measure and hence completely independent of the parametrization of $M$ by arbitrary mappings. This choice of submeasure avoids the Borel--Kolmogorov paradox.

As the canonically induced measure $\mu_M$ is difficult to compute, we derive conditions for $\varphi$ such that the respective fan measure coincides with the canonically induced measure. The crucial condition is
\[ \vert D \varphi\vert  \equiv \text{constant on } M \]
which can be geometrically interpreted as the sets $\{\varphi \in (s-\eps, s+\eps)\}$ being tubes of constant width around $M$ for $\eps>0$ small enough.

\section{Defining measures on a subset of lower dimension}\label{sec:resolutionBK}
Our setting will entail a probability space which is a subset of $\R^n$ and a singular event $M$ on which we would like to condition some probability measure $\mu$.
For all purposes here, this singular event will be a manifold of lower dimension than the probability space $\R^n$. If the measure does not have any atoms on this manifold, then it indeed constitutes a singular event, i.e. $\mu(M) = 0$.

We will define two notions of measures on such a submanifold $M$. The first notion we will call the ``canonically induced measure''. Its definition will be completely independent of any charts that are used in the definition of the manifold. We argue that this is the ``correct'' (or at least a canonical) way of defining a measure which is induced on a lower dimensional structure embedded in the probability space.
The second notion is a chart-dependent definition. As a given manifold can be described by various charts, there is not a unique way of doing that. We will show when and how those two notions coincide and we will argue that the Borel--Kolmogorov paradox happens when we compare instances of the second notion with different charts. Our proposed resolution of the paradox is to always use the version generated by the canonically induced measure, \textit{unless} one is explicitly interested in a specific parametrization given by a random variable, see section~\ref{sec:solution}.

\subsection{Canonically induced measure}

\begin{definition}[Canonically induced measure] \label{def:canonical_submeasure}
Let $\mu$ be an absolutely continuous probability measure on $(\R^n,\mathcal B(\R^n))$ with lower semicontinuous non-negative density $f_\mu$, and $M\subset\R^n$ be a $r$-rectifiable set with $\H^r(M)>0$ (this is in particular true if $M$ is a smooth submanifold of dimension $r$), where $0 \leq r \leq n$. Then we define a functional on subsets $A\subset M$ by setting
\begin{align}\label{eq:canonical_submeasure}
    \mu_{M}(A) := \frac{\int_A f_\mu \d\H^{r}}{\int_M f_\mu \d\H^{r}},
\end{align}
where $\H^r$ denotes the $r$-dimensional Hausdorff measure. We call $\mu_M$ the \textit{canonically induced measure} on $M$, and ``canonical'', because it is essentially defined via the natural metric given by the Euclidean distance. 
\end{definition}

\begin{remark}
Obviously, the measure $\mu_M$ is a generalization of the measure $\mu$ since by choosing $r=n$ it holds that $\mu_{\R^n}=\mu$.
\end{remark}

\begin{remark}
Note that the definition of $\mu_M$ mirrors in a very natural way the definition of conditional probability in the non-singular case, i.e., $\mu(D_1\vert D_2) := \frac{\mu(D_1\cap D_2)}{\mu(D_2)}$ for a probability measure $\mu$ and $\mu(D_2) > 0$. The Hausdorff measure allows us to bypass the issue of ``$0/0$''.

For this reason, we will also interpret $\mu_M$ as the ``canonical'' conditional probability of $A$ given $M$, or $\mu(A\vert M) := \mu_M(A).$
\end{remark}

\begin{figure}[hbtp]
\centering
\includegraphics[width=0.9\textwidth]{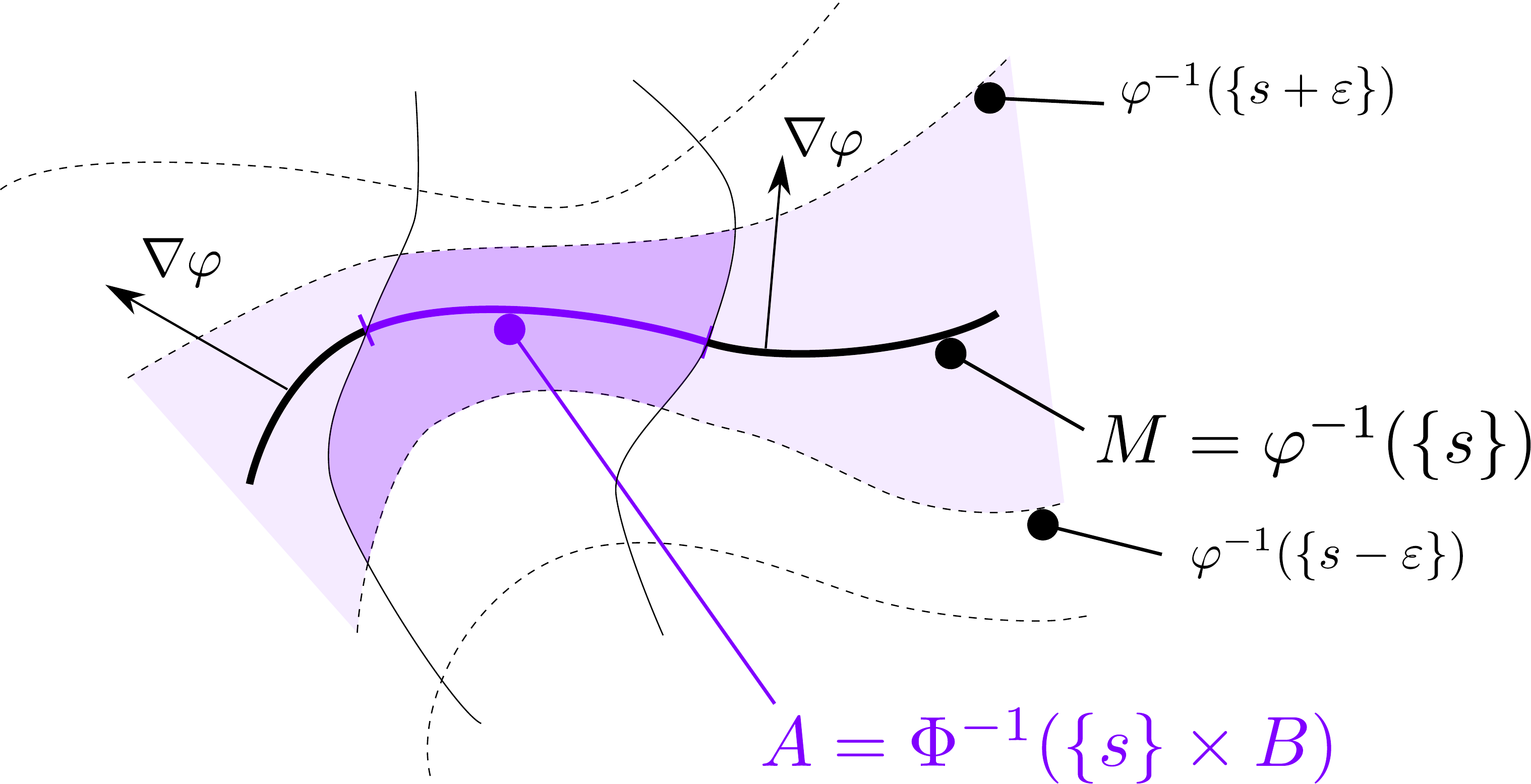}
\caption{Visualization of the fan measure: $\mu_{\Phi, s}(A)$ is the limit superior of the ratio of the areas enclosed between ``fans'' of values $s\pm \eps$.}
\end{figure}
The measure $\mu_M$ inherits being a metric outer measure from the Hausdorff measure.
\begin{lemma}
$\mu_M : \mathcal P(M) \to [0, 1]$ is a metric outer measure. 
\end{lemma}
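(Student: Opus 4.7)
The plan is to verify the outer measure axioms and the metric additivity property, reducing everything to the corresponding properties of the Hausdorff measure $\H^r$, which is well known to be a metric outer measure on $\R^n$.

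First, I would write $c := \int_M f_\mu \d\H^r$ and observe that, whenever this constant is positive, it suffices to show that the unnormalized set function $\nu(A) := \int_A f_\mu \d\H^r$ is a metric outer measure on $\mathcal P(M)$, since $\mu_M = \nu/c$ and multiplying by a positive constant preserves all relevant properties. For arbitrary $A\subset M$ the integral is to be understood in the upper/outer sense, or equivalently, via Borel regularity of $\H^r$, as the infimum of $\int_{B} f_\mu \d\H^r$ over Borel sets $B\supset A$. With this convention, $\nu(\emptyset)=0$ is immediate, and monotonicity $A\subset B\Rightarrow \nu(A)\leq \nu(B)$ follows from the monotonicity of integrals of nonnegative integrands.

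For countable subadditivity, I would estimate $\nu\bigl(\bigcup_i A_i\bigr)\leq \sum_i \nu(A_i)$ by first passing to Borel hulls $B_i\supset A_i$ approximating each $\nu(A_i)$, then using the pointwise inequality $f_\mu\,\mathbf{1}_{\bigcup_i B_i}\leq \sum_i f_\mu\,\mathbf{1}_{B_i}$, integrating against $\H^r$, and applying monotone convergence (or Beppo Levi) on the right-hand side. This step is routine once the outer-integral convention is in place.

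The main content is the metric additivity. Given $A,B\subset M$ with $\dist(A,B)>0$, I would choose Borel hulls that inherit a positive distance (using Borel regularity of $\H^r$ together with the fact that open $\delta$-neighborhoods of $A$ and $B$ are disjoint for $\delta<\tfrac12\dist(A,B)$), and then invoke the metric additivity of $\H^r$ applied to the Radon-Nikodym-type measure $f_\mu \d\H^r$. Concretely, since $f_\mu\,\mathbf{1}_{A\cup B} = f_\mu\,\mathbf{1}_A + f_\mu\,\mathbf{1}_B$ a.e.\ with respect to $\H^r$ on disjoint Borel hulls, the integrals split additively, yielding $\nu(A\cup B)=\nu(A)+\nu(B)$ and hence $\mu_M(A\cup B)=\mu_M(A)+\mu_M(B)$.

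The main obstacle I expect is purely technical: handling the fact that $\mu_M$ is defined on all of $\mathcal P(M)$ rather than only on Borel subsets, so that one must either systematically use outer integrals or pass to Borel hulls when applying the standard Hausdorff-measure machinery. Once this bookkeeping is settled, every step is a direct transfer of the known outer-measure and metric-additivity properties of $\H^r$ through the weighting by the nonnegative (lower semicontinuous, hence Borel measurable) density $f_\mu$ and the positive normalization constant $c$.
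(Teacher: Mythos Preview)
Your proposal is correct and follows exactly the route the paper indicates: the paper does not give a detailed proof but simply remarks that ``$\mu_M$ inherits being a metric outer measure from the Hausdorff measure,'' and your argument makes this inheritance precise by transferring the outer-measure axioms and metric additivity of $\H^r$ through the nonnegative Borel weight $f_\mu$ and the positive normalization constant. The only technical care needed---defining $\nu$ on all of $\mathcal P(M)$ via Borel hulls / outer integrals---is exactly what you flag, so nothing is missing.
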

Next, we define a chart-dependent measure which usually differs from $\mu_M$ but is used implicitly every time we condition on a random variable. This is a favourable approach in practice because it can be computed in practice. This ease of application comes with the ambiguity at the heart of the Borel--Kolmogorov paradox.

\subsection{Fan measure}
Let $(\R^n, \mathcal B(\R^n), \mu)$ be a probability space and $X$ a random variable on some abstract probability space $(\Omega,\A,\P)$ with values in $\R^n$ such that $\mu$ is the law of $X$, meaning
\begin{align*}
    \P(X\in C) = \mu(C).
\end{align*}
We start by defining a way of interpreting conditioning as ``looking at a submanifold'': to this end we consider the function 
\begin{align*}
    \Phi : \R^n \to \R^m,\quad
    x \mapsto 
    \begin{pmatrix}
        \varphi(x)\\\psi(x)
    \end{pmatrix},
\end{align*}
where $\varphi : \R^n \to \R^k$ and $\psi: \R^n \to \R^{m-k}$, $k\in\{1,2,\dots,m-1\}$, are continuous functions. 
We define a family of $n-k$-dimensional submanifolds of $\R^n$ by looking at the level sets of $\varphi$:
\[M^s := \{ x\in \R^n: \varphi(x) = s\},\]
where $s\in \R^k$, with $k=1$ being the application that we usually have in mind.

We assume that $\mu(\bigcup_{s\in S} M^s) = 1$ for some parameter set $S$. The family can be thought of as fanning out the whole space (up to sets of measure $0$). 

\begin{definition}[Fan measure]\label{def:fan_measure}
Let $\mu$ be a \textit{strictly positive} measure on $\R^n$, meaning that $\mu(A)>0$ for every open set $A\subset\R^n$.
Furthermore, we define $M^s = \Phi^{-1}(\{s\}\times \R^{m-k})$ and the associated trace $\sigma$-algebra
\begin{align}\label{eq:trace_sigma-alg}
    \A^s:={M^s}\cap\psi^{-1}(\mathcal B(\R^{m-k}))=\Phi^{-1}(\{s\}\times \mathcal B(\R^{m-k})).    
\end{align}
Then for $A=\Phi^{-1}(\{s\}\times B)\in\A^s$ we define
\begin{align}\label{eq:fan_submeasure}
    \mu_{\Phi, s}(A) := \limsup_{\eps\searrow 0}\frac{\Phi_\#\mu(B_\eps(s)\times B)}{\Phi_\#\mu(B_\eps(s)\times \R^{m-k})},
\end{align}
where $\Phi_\#\mu:=\mu\circ\Phi^{-1}$ is the push-forward measure of $\mu$ under $\Phi$ and $B_\eps(s):=\{x\in\R^k\st \vert x-s\vert <\eps\}$ denotes the open ball around $s\in\R^k$ with radius $\eps>0$. We call $\mu_{\Phi, s}$ the \textit{fan measure} associated to $\Phi$ (because the mapping $\Phi$ fans out the whole space and each submanifold is one fold of the fan).
\end{definition}

We will now see that the fan measure exactly corresponds to the usual notion of conditional distribution (under relatively mild assumptions on the random variables involved).
Consider an abstract probability space $(\Omega, \mathcal A, \P)$, a random variable $V: \Omega \to \R$ with density $f_V$ with respect to the Lebesgue measure, and a random vector $U: \Omega \to \R^{m-1}$ with joint density $f_{U,V}:\R^m\to \R$. 
We assume that the densities are continuous and that $f_Z$ is strictly positive.

For $a<b$, we can without ambiguity calculate the probability 
$$\P(U\in B\vert  V\in (a,b)) = \left({\int_a^b \int_B f_{U,V}(u,v) \d u\d v}\right){\bigg/}\left({\int_a^b f_V(v)\d v}\right).$$

To condition on $V=s$ one usually employs the singular limit (if it exists)
\begin{align} \label{eq:singular_cond_prob}
\P(U\in B\vert V=s) :=  \limsup_{\eps \searrow 0} \P(U\in B\vert V \in (s-\eps, s+ \eps)).
\end{align}
If one uses the densities $f_{U,V}$ and $f_V$ one can use this definition to calculate the conditional density using the Lebesgue-Besicovich differentiation theorem (or the fundamental theorem of calculus):
\begin{align*} 
\P(U\in B\vert V=s) &=  \limsup_{\eps \searrow 0} \P(U\in B\vert V \in (s-\eps, s+ \eps)) \\
&=\limsup_{\eps\searrow 0} \frac{\int_{s-\eps}^{s+\eps}\int_B f_{U,V}(u,v)\d u \d v}{\int_{s-\eps}^{s+\eps}f_V(v)\d v}\\
&= \lim_{\eps\searrow 0} \frac{\frac{1}{2\eps}\int_{s-\eps}^{s+\eps}\int_B f_{U,V}(u,v)\d u\d v}{\frac{1}{2\eps}\int_{s-\eps}^{s+\eps}f_V(v)\d v}\\
&= \frac{\int_B f_{U,V}(u,s) \d u}{f_V(s)}
\end{align*}
Thus, the conditional density of $U$ given $V=s$ for some $s\in \R$ is defined as 
\begin{align}\label{eq:cond_density}
    f_{U\vert V=s}(u) = \frac{f_{U,V}(u,v)}{f_V(s)}.
\end{align}
This is the way one usually defines conditional densities (see for example section 1.4 in \cite{rao2005conditional}).

Now we can see that the fan measure corresponding to fans of co-dimension one is directly related to conditioning a random variable on a subset of measure zero:

\begin{lemma}\label{lem:conditioning_on_rv}
Let $X:(\Omega, \mathcal A, \P)\to (\R^n, \mathcal B(\R^n))$ be a random variable with law $\mu$, i.e., $\P(X\in C) = \mu(C)$. Define the random variable $Z := \varphi(X)$ and the random vector $W := \psi(X)$ (i.e., $(Z, W) = \Phi(X)$) for continuous mappings $\varphi: \R^n\to \R$ and $\psi: \R^n\to \R^{m-1}$, and write $\Phi = (\varphi,\psi)$. 

Then, for $A:=\Phi^{-1}(\{s\}\times B) \in \A^s$ it holds
\[ \mu_{\Phi, s}(A) =  \P(W \in B\vert  Z = s),\]
where $\mu_{\Phi,s}$ is as in Definition~\ref{def:fan_measure} for $k=1$ and $\P(W \in B\vert  Z = s)$ is defined by \eqref{eq:singular_cond_prob}.
\end{lemma}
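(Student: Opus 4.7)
The plan is essentially to unfold both sides of the identity using the definition of the pushforward measure and to observe that, after this unfolding, the two expressions literally coincide. Specifically, I would first rewrite the numerator and denominator in the definition of $\mu_{\Phi,s}(A)$ using that $\Phi_\#\mu(\cdot) = \mu(\Phi^{-1}(\cdot)) = \P(X \in \Phi^{-1}(\cdot)) = \P(\Phi(X) \in \cdot)$, where the last step uses that $\mu$ is the law of $X$. Combined with $\Phi(X) = (\varphi(X), \psi(X)) = (Z,W)$ and $k=1$ (so $B_\eps(s) = (s-\eps, s+\eps)$), this immediately gives
\[
\Phi_\#\mu(B_\eps(s)\times B) = \P(Z\in(s-\eps,s+\eps),\, W\in B),
\qquad
\Phi_\#\mu(B_\eps(s)\times \R^{m-1}) = \P(Z\in(s-\eps,s+\eps)).
\]

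The second step is to expand the right-hand side $\P(W\in B\mid Z=s)$ via its definition in \eqref{eq:singular_cond_prob} as a limit superior over shrinking intervals, and then to apply the elementary definition of conditional probability (valid since the denominator is positive, see next paragraph) to obtain exactly the same ratio. At this point, both sides are the same $\limsup$ of the same sequence of ratios, so the identity follows directly.

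The only technical point one must verify is that the denominator $\P(Z\in(s-\eps,s+\eps))$ is strictly positive for all sufficiently small $\eps>0$, so that both the fan measure and the conditional probability are well defined. This is where the strict-positivity assumption on $\mu$ enters: since $\varphi$ is continuous, the preimage $\varphi^{-1}((s-\eps,s+\eps))$ is open, and one only needs to check it is nonempty for $\eps$ small (which is guaranteed whenever $A = \Phi^{-1}(\{s\}\times B)$ is itself nonempty, since then $s$ lies in the range of $\varphi$). Thus strict positivity of $\mu$ yields $\mu(\varphi^{-1}((s-\eps,s+\eps)))>0$, i.e., $\P(Z\in(s-\eps,s+\eps))>0$. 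I do not anticipate any real obstacle: the lemma is essentially a bookkeeping statement identifying the fan measure with the standard density-based conditional distribution, and the proof is a one-line unwinding of definitions once positivity of the denominator is noted.
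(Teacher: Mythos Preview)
Your proposal is correct and follows essentially the same route as the paper's proof: both arguments first use strict positivity of $\mu$ together with continuity of $\varphi$ to ensure $\P(Z\in(s-\eps,s+\eps))>0$, and then identify the two sides by rewriting $\Phi_\#\mu$ as $\P((Z,W)\in\cdot)$ so that the fan measure and the singular conditional probability become the same $\limsup$ of ratios. Your remark that nonemptiness of $\varphi^{-1}((s-\eps,s+\eps))$ follows from $A\neq\emptyset$ is in fact slightly more careful than the paper, which tacitly assumes this.
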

\begin{proof}
Since $\mu$ is a strictly positive measure and $\varphi$ is continuous, it holds for any open interval $I\subset\R$
$$\P(Z\in I)=\P(\varphi(X)\in I)=\P(X\in\varphi^{-1}(I))=\mu(\varphi^{-1}(I))>0.$$
Hence, we can use the definition of the conditional probability to obtain
\begin{align*}
\P(W\in B\vert Z = s) &= \limsup_{\eps \searrow 0} \P(W\in B\vert Z \in (s-\eps, s+ \eps))\\
&= \limsup_{\eps \searrow 0} \frac{\P(W\in B,~Z \in (s-\eps, s+ \eps))}{\P(Z \in (s-\eps, s+ \eps))}\\
&= \limsup_{\eps \searrow 0} \frac{\Phi_\#\mu((s-\eps,s+\eps)\times B) }{\Phi_\#\mu((s-\eps,s+\eps)\times \R^{m-1})} \\
&=\mu_{\Phi,s}(A),
\end{align*}
where in the last step we utilized that $B_\eps(s)=(s-\eps,s+\eps)$ in dimension $k=1$.
\end{proof}
\begin{corollary}
The fan measure \eqref{eq:fan_submeasure} for $k=1$ is a probability measure if and only if the singular conditional probability \eqref{eq:singular_cond_prob} is. 
This is the case, for instance, if the latter has the conditional density \eqref{eq:cond_density}.
\end{corollary}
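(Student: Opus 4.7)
The plan is to obtain the corollary as an almost immediate consequence of Lemma~\ref{lem:conditioning_on_rv}. That lemma, specialized to $k=1$, already exhibits an explicit one-to-one correspondence between sets $A\in\A^s$ and Borel sets $B\subset\R^{m-1}$ via $A=\Phi^{-1}(\{s\}\times B)$, and under this correspondence asserts the pointwise equality $\mu_{\Phi,s}(A)=\P(W\in B\vert Z=s)$. In other words, the two set functions in question are the same function once we identify the $\sigma$-algebras $\A^s$ and $\mathcal B(\R^{m-1})$. Non-negativity, normalization and $\sigma$-additivity are properties of a set function, so the identification transports them in both directions simultaneously. This yields the ``if and only if'' part with essentially no additional work.

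First I would spell out that $B\mapsto A=\Phi^{-1}(\{s\}\times B)$ is a $\sigma$-algebra isomorphism between $\mathcal B(\R^{m-1})$ and $\A^s$; in particular, $B=\R^{m-1}$ corresponds to $A=M^s$, and disjoint unions in $\mathcal B(\R^{m-1})$ map to disjoint unions in $\A^s$. Then, by Lemma~\ref{lem:conditioning_on_rv}, the set-function value is preserved. Hence $\mu_{\Phi,s}(M^s)=\P(W\in\R^{m-1}\vert Z=s)$, $\mu_{\Phi,s}(A)\geq 0$ iff the corresponding probability is non-negative (trivially true), and countable additivity transfers verbatim along the isomorphism. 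This is the content of the first assertion of the corollary.

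For the ``for instance'' claim, I would invoke \eqref{eq:cond_density}: under the stated continuity and strict positivity assumptions on $f_{U,V}$ and $f_V$, the conditional density $f_{U\vert V=s}(u)=f_{U,V}(u,s)/f_V(s)$ is a well-defined non-negative continuous function. The singular conditional probability is then represented as $\P(U\in B\vert V=s)=\int_B f_{U\vert V=s}(u)\d u$, and this is an honest probability measure on $\mathcal B(\R^{m-1})$: non-negativity is immediate, countable additivity follows from monotone/dominated convergence for the Lebesgue integral, and normalization $\int_{\R^{m-1}}f_{U\vert V=s}(u)\d u=1$ follows from Fubini applied to the joint density, which gives the marginal identity $f_V(s)=\int_{\R^{m-1}}f_{U,V}(u,s)\d u$. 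Combining this with the ``if and only if'' direction establishes the second assertion.

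The main obstacle is essentially bookkeeping rather than mathematics: one has to be a little careful that the trace $\sigma$-algebra $\A^s$ defined in \eqref{eq:trace_sigma-alg} is genuinely in measurable bijection with $\mathcal B(\R^{m-1})$ via $\Phi$, so that countable operations on the two sides really do correspond. Once that is verified, the equivalence is a tautology, and the density case requires only the classical fact that integration against a non-negative density of total mass one yields a probability measure.
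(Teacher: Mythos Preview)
Your proposal is correct and matches the paper's approach: the corollary is stated there without proof, as an immediate consequence of Lemma~\ref{lem:conditioning_on_rv}, and you have simply spelled out why. One small caveat: calling $B\mapsto\Phi^{-1}(\{s\}\times B)$ a $\sigma$-algebra \emph{isomorphism} is slightly stronger than what you actually use or need---the map is a preimage map, hence preserves (disjoint) countable unions and sends $\R^{m-1}$ to $M^s$, which is all that is required to transport non-negativity, normalization and $\sigma$-additivity; injectivity of the correspondence is not needed for the argument and may fail if $\psi$ is not injective on $M^s$.
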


More concretely, this means that different fan measures on a manifold $M$ correspond to using different (artificial) random variables to condition on this manifold. This is the reason for the ambiguity in the Borel--Kolmogorov paradox: There is no a-priori canonical random variable which we need to condition on in order to obtain the ``correct'' restriction on $M$. 

On the other hand, the canonically induced measure is defined without additional random variables and in a way that only uses the metric of the space (incorporated through the Hausdorff measure). For this reason, it is interesting to investigate which fan measures yield the same measure as the canonical measure.

\subsection{Conditions for equivalence of canonical and fan measure}

\begin{theorem}[Equality of canonical and fan measure]\label{thm:main}
Let $A = \Phi^{-1}(\{s\}\times B) \in \A^s$ for some $s\in\R^k$ and some set $B\subset\B(\R^{m-k})$. Denote $J\varphi(x):=\sqrt{\det(D\varphi(x)D\varphi(x)^T)}$ for $x\in\Omega$ and assume that
\begin{itemize}
    \item there exists a constant $C>0$ such that $J\varphi \equiv C$ on $M^s$,
    \item there exists a constant $r>0$ and an open neighborhood $U$ of $M^s$ such that the smallest singular value of $D\varphi$ is bounded from below by $r$ on $U$, 
    \item $J\varphi\in C^1$ on $U$.
\end{itemize}
Then it holds for almost all $s\in\R^k$
\begin{align}
    \mu_{\Phi,s}(A) = \mu_{M^s}(A).
\end{align}
\end{theorem}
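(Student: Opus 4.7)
The plan is to express numerator and denominator in \eqref{eq:fan_submeasure} as integrals over $B_\eps(s)\subset\R^k$ via the coarea formula and then pass to the limit $\eps\searrow 0$ using Lebesgue differentiation in $\R^k$. By definition of the push-forward and of $f_\mu$,
\[
\Phi_\#\mu(B_\eps(s)\times B)=\mu\bigl(\varphi^{-1}(B_\eps(s))\cap\psi^{-1}(B)\bigr)=\int_{\varphi^{-1}(B_\eps(s))\cap\psi^{-1}(B)} f_\mu(x)\,\d x,
\]
so the whole question is how this quantity behaves as $\eps\searrow 0$.

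The lower bound on the smallest singular value of $D\varphi$ on $U$ guarantees that $J\varphi\geq r^k>0$ on a neighborhood of $M^s$, hence $\varphi$ is a $C^1$ submersion there and $f_\mu/J\varphi$ is well defined. For $\eps$ small enough (shrinking $U$ or using a cutoff to localize near $M^s$) the coarea formula, applied with $g=(f_\mu/J\varphi)\mathbf 1_{\varphi^{-1}(B_\eps(s))\cap\psi^{-1}(B)}$, gives
\[
\int_{\varphi^{-1}(B_\eps(s))\cap\psi^{-1}(B)} f_\mu\,\d x=\int_{B_\eps(s)} h_B(y)\,\d y,\qquad h_B(y):=\int_{M^y\cap\psi^{-1}(B)}\frac{f_\mu}{J\varphi}\,\d\H^{n-k},
\]
and the same identity with $B$ replaced by $\R^{m-k}$ for the denominator. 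The coarea formula further tells us that $h_B,h_{\R^{m-k}}\in L^1_{\mathrm{loc}}(\R^k)$.

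Now I apply the Lebesgue differentiation theorem: for almost every $s\in\R^k$,
\[
\lim_{\eps\searrow 0}\frac{1}{|B_\eps(s)|}\int_{B_\eps(s)} h_B(y)\,\d y=h_B(s),
\]
and the analogous statement for $h_{\R^{m-k}}$. At such $s$ the $\limsup$ in \eqref{eq:fan_submeasure} is a genuine limit equal to $h_B(s)/h_{\R^{m-k}}(s)$. The constancy hypothesis $J\varphi\equiv C$ on $M^s$ now allows me to pull the factor $1/C$ out of both integrals defining $h_B(s)$ and $h_{\R^{m-k}}(s)$ and cancel it, leaving
\[
\mu_{\Phi,s}(A)=\frac{\int_{M^s\cap\psi^{-1}(B)} f_\mu\,\d\H^{n-k}}{\int_{M^s} f_\mu\,\d\H^{n-k}}=\mu_{M^s}(A),
\]
since $A=M^s\cap\psi^{-1}(B)$.

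The main obstacle will be the clean application of the coarea formula. One has to ensure that $\varphi$ is sufficiently regular (at least locally Lipschitz, which follows from $J\varphi\in C^1$ together with the singular-value bound), and that the integrals can be honestly localized to the neighborhood $U$ on which $J\varphi$ is bounded away from zero; otherwise dividing by $J\varphi$ is problematic, and for non-compact $M^s$ one may need a partition-of-unity or cutoff argument to make this rigorous. A secondary subtlety is that in general $\limsup(a_\eps/b_\eps)$ does not equal the ratio of $\limsup$s, which is exactly why the conclusion is only asserted for almost every $s$: at every common Lebesgue point of $h_B$ and $h_{\R^{m-k}}$ both limits exist, the ratio is unambiguous, and the argument goes through.
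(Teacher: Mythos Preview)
Your argument is correct and follows the same overall architecture as the paper (coarea formula plus Lebesgue differentiation in $\R^k$), but with one genuine simplification in the middle.

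The paper applies the coarea formula with weight $g=f_\mu\,\chi_E$, where $E=\Phi^{-1}(B_\eps(s)\times B)$, obtaining
\[
\int_E J\varphi\, f_\mu\,\d x=\int_{B_\eps(s)}\int_{M^t\cap\psi^{-1}(B)} f_\mu\,\d\H^{n-k}\,\d t.
\]
To relate the left side to $\Phi_\#\mu(B_\eps(s)\times B)=\int_E f_\mu$, the paper must show $J\varphi(x)=C+o(\eps)$ on $E$; this requires a quantitative bound on $\dist(x,M^s)$, which the paper obtains via a gradient-flow/ODE argument driven by the right inverse of $D\varphi$ and the lower singular-value bound.

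You instead apply coarea with $g=(f_\mu/J\varphi)\,\chi_E$, which gives $\Phi_\#\mu(B_\eps(s)\times B)=\int_{B_\eps(s)}h_B$ directly, with $h_B(t)=\int_{M^t\cap\psi^{-1}(B)}(f_\mu/J\varphi)\,\d\H^{n-k}$. After Lebesgue differentiation the constancy $J\varphi\equiv C$ is invoked only on the single level set $M^s$, as an exact identity rather than an approximation, and the gradient-flow step disappears entirely. In your version the singular-value bound is used merely to guarantee $J\varphi\geq r^k>0$ on $U$, so that the division is legitimate. This is a cleaner route; the paper's ODE argument buys additional geometric insight (it quantifies how the tubular neighborhoods $\varphi^{-1}(B_\eps(s))$ collapse onto $M^s$) but is not logically necessary for the conclusion.

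Both proofs rely on the localization $\Phi^{-1}(B_\eps(s)\times B)\subset U$ for small $\eps$; the paper asserts this without comment, and you correctly flag that for non-compact $M^s$ a cutoff or partition-of-unity argument may be needed to make it rigorous.
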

\begin{proof}
We recall the coarea formula \citep{evans2015measure}
\[\int_{\R^n}g(x) J\alpha(x)\d x = \int_{\R^k}  \int_{\alpha^{-1}({t})} g(x) \d\H^{n-k}(x) \d t   \]
for functions $\alpha:\R^n \to \R^k$ and $g:\R^n\to\R$ where $J\alpha(x) = \sqrt{\det(D\alpha(x)D\alpha(x)^T)}$. Using this formula with $\alpha=\varphi$ and $g=\chi_{\Phi^{-1}(B_\eps(s)\times B)}f_\mu$ we obtain
\begin{align}\label{eq:coarea_measures}
    \int_{\Phi^{-1}(B_\eps(s)\times B)}J\varphi(x)f_\mu(x)\d x = \int_{B_\eps(s)}\int_{\Phi^{-1}(\{t\}\times B)}f_\mu(x)\d\H^{n-1}(x) \d t.
\end{align}
Let us choose $\eps>0$ small enough such that $\Phi^{-1}(B_\eps(s) \times B) \subset U$. Then we can use that $J\varphi$ is continuously differentiable to obtain $J\varphi(x)=C+o(\dist(x,M^s))$ for $x\in \Phi^{-1}(B_\eps(s) \times B)$.
To estimate $\dist(x,M^s)$ we study the gradient flow
\begin{align*}
    \dot{z}(t) = -A(t)\sign(\varphi(z(t))-s),\quad z(0)=x,
\end{align*}
where $A(t)\in\R^{n\times k}$ is a right inverse of $D\varphi(z(t))\in\R^{k\times n}$.
Note that such a right inverse is given by $A(t) := D\varphi(z(t))^T(D\varphi(z(t))D\varphi(z(t))^T)^{-1}$ which is well-defined on $U$ by our assumptions.
We used the notation
\begin{align*}
    \sign:\R^k\to\R^k,\quad
    \sign(x)=
    \begin{cases}
    \frac{x}{\vert x\vert }, \quad &x\neq 0 \\
    0,\quad &x=0.
    \end{cases}
\end{align*}
Along the solution $z(t)$ it holds by the chain rule:
$$\frac{\d}{\d t}\varphi(z(t)) = -\sign(\varphi(z(t))-s).$$ 
Hence, the curve $\eta(t):=\varphi(z(t))-s$ solves the gradient flow of the Euclidean norm in~$\R^k$, given by
$$\frac{\d}{\d t}\eta(t) = -\sign(\eta(t)),\quad \eta(0)=\varphi(x)-s.$$ 
This flow has an explicit solution \citep{bungert2019nonlinear} given by
$$\eta(t) = \max(1-\lambda t,0)\,\eta(0),\quad \lambda=\frac{1}{\vert \eta(0)\vert },$$
which becomes extinct at time $$T:=1/\lambda=\vert \eta(0)\vert =\vert \varphi(x)-s\vert .$$
Hence, it holds $\eta(T)=0$ which is equivalent to $\varphi(z(T))=s$ and $y:=z(T) \in M^s$.
Note furthermore that $T$ meets $0<T<\eps$.
It holds that $\|A(s)\|=\frac{1}{\sigma_{\min}}$, where $\sigma_{\min}$ is the smallest singular value of the matrix $D\varphi(z(s))$ and by assumption is larger or equal than $r>0$ on $U$.
Hence, we obtain
\begin{align*}
\vert x-y\vert  &= \vert z(0) - z(T)\vert  \leq \int_0^T \|A(s)\| \d s \leq \frac{T}{r} < \frac{\eps}{r}.
\end{align*}
This shows that $o(\dist(x,M^s))=o(\eps)$ and hence by using $J\varphi(x)=C+o(\eps)$ in \eqref{eq:coarea_measures} we obtain 
\begin{align*}
    C\cdot \Phi_\#\mu(B_\eps(s) \times B) = \int_{B_\eps(s)}\int_{\Phi^{-1}(\{t\}\times B)}f_\mu(x)\d\H^{n-k}(x) \d t + o(\eps).
\end{align*}
Using this identity also for $B=\R^{m-k}$ and cancelling the factor $C>0$ yields
\begin{align*}
    \mu_{\Phi,s}(A) 
    &= \lim_{\eps\searrow 0} \frac{\Phi_\#\mu(B_\eps(s) \times B)}{\Phi_\#\mu(B_\eps(s) \times \R^{m-k})} \\
    &= \lim_{\eps\searrow 0} \frac{\int_{B_\eps(s)}\int_{\Phi^{-1}(\{t\}\times B)}f_\mu(x)\d\H^{n-k}(x) \d t + o(\eps)}{\int_{B_\eps(s)}\int_{\Phi^{-1}(\{t\}\times \R^{m-k})}f_\mu(x)\d\H^{n-k}(x) \d t + o(\eps)} \\
    &= \lim_{\eps\searrow 0} \frac{\frac{1}{\vert B_\eps(s)\vert }\int_{B_\eps(s)}\int_{\Phi^{-1}(\{t\}\times B)}f_\mu(x)\d\H^{n-k}(x) \d t}{\frac{1}{\vert B_\eps(s)\vert }\int_{B_\eps(s)}\int_{\Phi^{-1}(\{t\}\times \R^{m-k})}f_\mu(x)\d\H^{n-k}(x) \d t}.
\end{align*}
Now we observe that $\Phi^{-1}(\{t\}\times B)=\varphi^{-1}\{t\}\cap\psi^{-1}B$ is the intersection of a level set and a measurable set.
Hence, thanks to \cite[Lemma 3.5]{evans2015measure}, the function
$$t\mapsto \int_{\Phi^{-1}(\{t\}\times B)}f_\mu(x)\d\H^{n-k}(x)$$
is Lebesgue measurable.
This allows us to use the Lebesgue differentiation theorem to obtain for almost all $s\in\R^k$:
\begin{align*}
   \mu_{\Phi,s}(A) 
    = \frac{\int_{\Phi^{-1}(\{s\}\times B)}f_\mu(x)\d\H^{n-k}(x)}{\int_{\Phi^{-1}(\{s\}\times \R^{m-k})}f_\mu(x)\d\H^{n-k}(x)} 
    = \mu_{M^s}(A).
\end{align*}
\end{proof}

Now we have established a way of dealing with the Borel--Kolmogorov paradox: If we want to condition on a singular event, we don't need to use an arbitrary parametrization of this set as a level set of an additional auxiliary random variable. Instead, we can by default choose the canonically induced submeasure on this event. As this measure is difficult to work with in practice, theorem \ref{thm:main} tells us exactly which (equivalence class of) auxiliary random variables we can use in order to use the power of conditional densities for a concrete computation.

\subsection{The fan measure if the transformation mapping is a diffeomorphism}
In practice, the transformation used often is a diffeomorphism. In the guiding example, the two choices correspond to $\Phi(x) = (x_2/x_1, x_1)$ and $\Phi(x) = (x_1+x_2, x_1)$, respectively. In this case calculations are a lot easier. We can relax this to $\Phi$ being a diffeomorphism ``almost everywhere'', in the following sense:
\begin{assumption} \label{ass:diffeo}
We assume that $m=n$ and there is a set $S\subset \R^n$ with full measure $\mu(S)=1$ such that $\Phi : S\to \Phi(S)\subset \R^n$ is differentiable and invertible such that $\Phi^{-1}: \Phi(S) \to S$ is differentiable as well.
\end{assumption}
\begin{remark}
Examples for such almost everywhere diffeomorphisms (in addition to the guiding example) include polar coordinates on $\R^2$ if $\mu$ is absolutely continuous with respect to the Lebesgue measure: The ``stitches'' of the polar coordinates where the coordinate transform is not invertible is a set of measure $0$.
\end{remark}

In this case, we can explicitly calculate the fan measure by a integral transformation. Take again a measurable set $A = \Phi^{-1}(\{s\}\times B)$, then

\begin{align}
\mu_{\Phi, s}(A) &= \lim_{\eps\to 0}\frac{\frac1 {\vert B_\eps(s)\vert }\int_{B_\eps(s)}\int_{B}f_\mu(\Phi^{-1}(t,u))\cdot \vert \det J\Phi^{-1}(t,u)\vert  \d u \d t}{\frac1 {\vert B_\eps(s)\vert }\int_{B_\eps(s)}\int_{\R^{n-k}}f_\mu(\Phi^{-1}(t,u))\cdot \vert \det J\Phi^{-1}(t,u)\vert  \d u \d t} \notag \\
&=\frac{\int_{B}f_\mu(\Phi^{-1}(s,u))\cdot \vert \det J\Phi^{-1}(s,u)\vert  \d u }{\int_{\R^{n-k}}f_\mu(\Phi^{-1}(s,u))\cdot \vert \det J\Phi^{-1}(s,u)\vert  \d u }. \label{eq:fanmeasure_diffeo}
\end{align}
This means that the fan measure has a ``surface integral density'' which is weighted by the determinant term from the transformation mapping $\Phi$. As different choice of $\Phi$ correspond to different weightings, this immediately shows that the Borel--Kolmogorov paradox must appear.

\paragraph{Example: Nonlinear Shearing}
Consider cartesian coordinates $x=(x_1, x_2,\dots,x_n)\in \R^n$  and define
\begin{align*}
\Phi(x) &= \begin{pmatrix}
\varphi(x)\\ x_2 \\ \vdots \\ x_n
\end{pmatrix}
\end{align*}
This is, for example, the setting of the guiding example and the choices of parametrizations considered. 
Assume that assumption \ref{ass:diffeo} holds. 
The inverse mapping is of the form $\Phi^{-1}(y) = (\chi(y), y_2, \ldots, y_n)$ and 
\[J\Phi^{-1}(y) = \begin{bmatrix}
\frac{\partial \chi}{\partial y_1}(y)& \frac{\partial \chi}{\partial y_2}(y)  & \frac{\partial \chi}{\partial y_3}(y) & \cdots & \frac{\partial \chi}{\partial y_n}\\
0 & 1 & 0 & \cdots & 0\\
0 & 0 & 1 & \cdots  & 0\\
\vdots & \vdots & \vdots & \ddots & \vdots\\
0 & 0 & 0 & \cdots & 1
\end{bmatrix}\]
and thus $\det J\Phi^{-1}(y) = \frac{\partial \chi}{\partial y_1}(y)$. We can obtain information about $\chi$ by applying the implicit function theorem: In order to have $\Phi(\Phi^{-1}(y)) = y$, we need to have $y_1 = \varphi(\chi(y), y_2, \ldots, y_n)$ for all $y\in \R^n$. 

In order to now leverage the implicit function theorem, we look at the function $f(y, w) = \varphi(w, y_2, \ldots, y_n) - y_1$ and we are interested in the case $f(y, w) = 0$. 

Consider any point $y$ in the set $\Phi(M)$ where $\Phi^{-1}$ is a diffeomorphism. In particular, $\frac{\partial \varphi}{\partial x_1}(\Phi^{-1}(y))\neq 0$. The implicit function theorem yields the existence of a mapping $\chi$ such that $f(y, \chi(y)) = 0$ in a neighborhood of $y$ and that 
\begin{align*}
\frac{\partial \chi}{\partial y_1}(y) &= \left(-\frac{\partial f}{\partial w}(y, \chi(y))\right)^{-1}\cdot \left(\frac{\partial f}{\partial y_1}(y, \chi(y))\right) =  \left(\frac{\partial \varphi}{\partial x_1}( \chi(y), y_2,\ldots,y_n)\right)^{-1} \\
&= \left(\frac{\partial \varphi}{\partial x_1}(\Phi^{-1}(y))\right)^{-1}
\end{align*}

Hence, 
\[\vert \det J\Phi^{-1}(y)\vert  = \left\vert \frac{\partial \varphi}{\partial x_1}(\Phi^{-1}(y))\right\vert ^{-1}.\]

Returning to our characterization \eqref{eq:fanmeasure_diffeo} of the fan measure:

\begin{align*}
\mu_{\Phi, s}(A) &=\frac{\int_{B}f_\mu(\Phi^{-1}(s,u))\cdot \vert \det J\Phi^{-1}(s,u)\vert  \d u }{\int_{\R^{n-1}}f_\mu(\Phi^{-1}(s,u))\cdot \vert \det J\Phi^{-1}(s,u)\vert  \d u }\\
&=\frac{\int_{B}f_\mu(\Phi^{-1}(s,u))\cdot \left\vert \frac{\partial \varphi}{\partial x_1}(\Phi^{-1}(s,u))\right\vert ^{-1} \d u }{\int_{\R^{n-1}}f_\mu(\Phi^{-1}(s,u))\cdot \left\vert \frac{\partial \varphi}{\partial x_1}(\Phi^{-1}(s,u))\right\vert ^{-1} \d u }
\end{align*}

We can see that we could have resolved the apparent paradox in the guiding example by this approach as well: The two different parametrizations correspond to a choice of $\varphi(x_1,x_2) = x_2/x_1$ and $\varphi(x_1,x_2) = x_1+x_2$, respectively. The first option directly yields the additional factor of $\vert u\vert $ in the resulting density, whereas the second one coincides with the canonically induced submeasure since $D\varphi$ is constant.

\paragraph{Example: Polar coordinates in 2d}
We consider cartesian coordinates $(x_1, x_2) \in \R^2$ and the coordinate transformation $x_1 = y_1 \cos(y_2), x_2 = y_2\sin(y_2)$ where we (differing from convention) set $y_1\in \R$ (instead of $\R^+$) and $y_2\in (0, \pi)$. The mappings are 

\begin{align*}
\Phi(x) &= \begin{pmatrix}\sign(x_2)\cdot \sqrt{x_1^2+x_2^2} \\
\arccos \left(\frac{x_1}{\sqrt{x_1^2+x_2^2}}\right)
\end{pmatrix} = \begin{pmatrix}
\varphi(x)\\ \psi(x)
\end{pmatrix}\\
\Phi^{-1}(y) &= \begin{pmatrix}
y_1 \cos y_2\\ y_1\sin y_2
\end{pmatrix}
\end{align*}
with $\vert \det (J\Phi^{-1}(y))\vert  = \vert y_1\vert $. If $X$ is a random variable on $\R^2$ with density $f$, then $Y = \Phi(X)$ has density $g(y) = f(y_1\cos y_2, y_1\sin y_2)\cdot \vert y_1\vert $.

\begin{figure}[hbtp]
\centering
\includegraphics[width=0.5\textwidth]{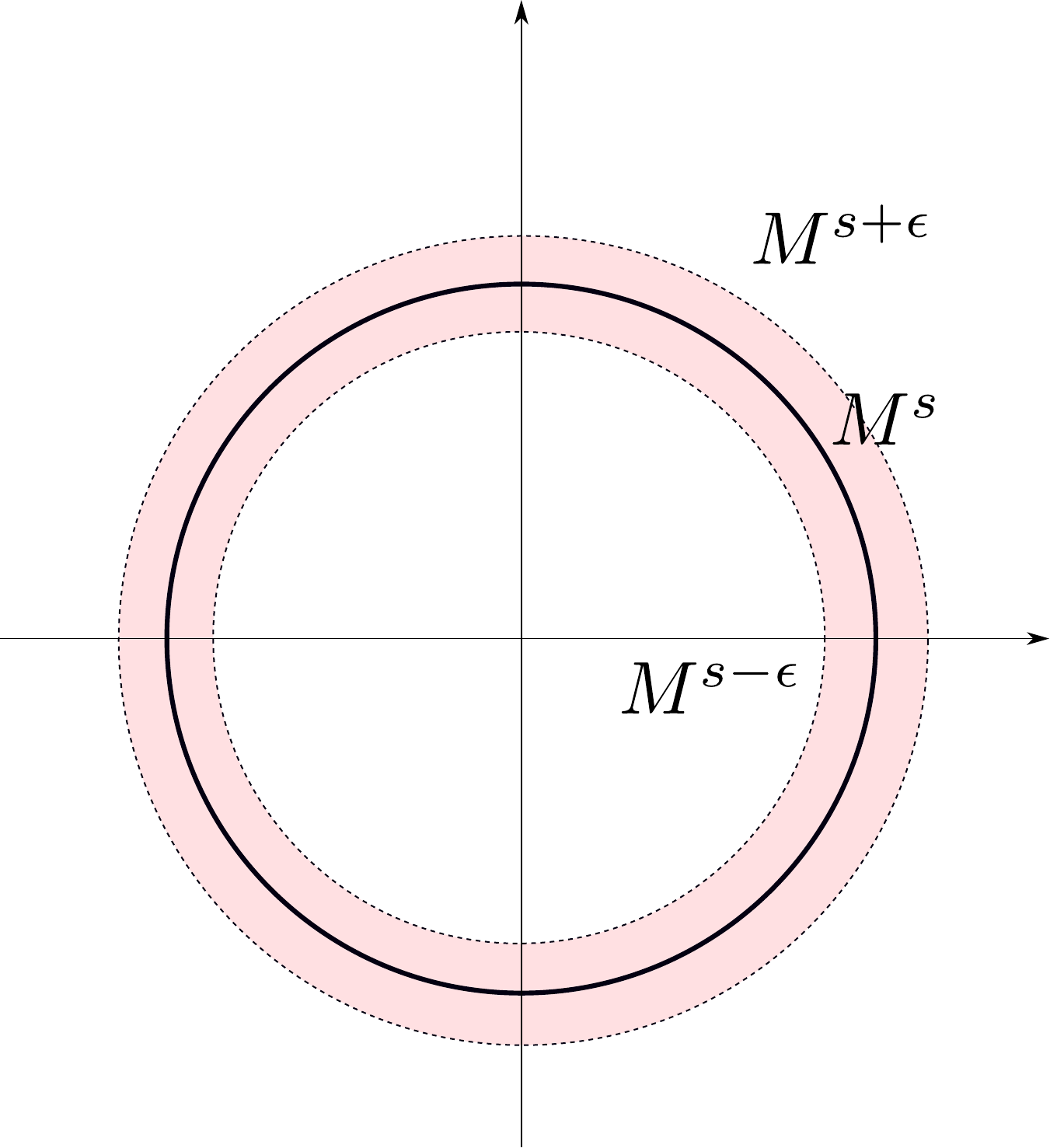}
\caption{$M^s$ is uniformly approximated by rings of vanishing thickness.}
\end{figure}

We first consider the sphere of radius $s$ in two dimensions $M^s := \{x\in \R^2 \st \vert x\vert  = s\} = \{x \st \varphi(x) = s\}$.  It is easily seen that $\vert D\varphi(x)\vert  = 1$ for $x \in M^s$ (this is due to the fact that concentrical spheres have constant distance to each other everywhere), hence theorem \ref{thm:main} states that the canonically induced measure $\mu_{M^s}$ coincides with the chart dependent measure $\mu_{\Phi, s}$. This in turn (according to lemma \ref{lem:conditioning_on_rv}) is the same as the conditional distribution of $W = \psi(X)$ (the polar angle) given by $\varphi(X) = s$ (the radius), and is given by
\[\mu_{\Phi,s}(\Phi^{-1}(\{s\}\times B))= \frac{1}{C_s}\int_B f(s\cdot \cos u, s\sin u)\d u,\]
with a suitable constant $C_s>0$ (cf.~\eqref{eq:fanmeasure_diffeo}).
If, for example, $X\sim N(0,1)\otimes N(0,1)$, then this is the uniform measure on the sphere with radius $s$.

\begin{figure}[hbtp]
\centering
\includegraphics[width=0.5\textwidth]{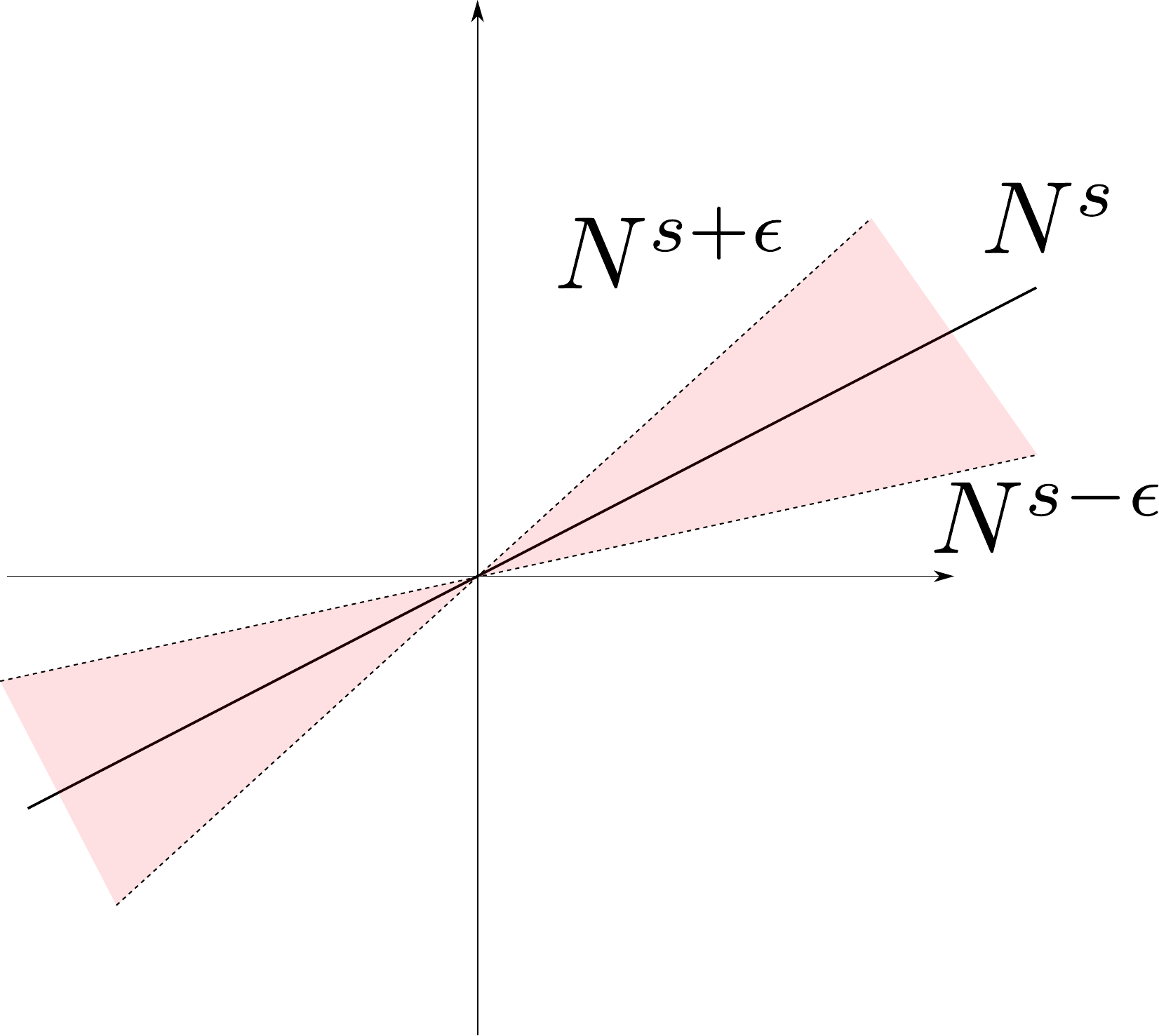}
\caption{$N^s$ is (non-uniformly) approximated by cones of vanishing angle. This means that points far away from the origin are weighted more strongly because of the wide opening of the approximating cone. This is the significance of the factor $\vert u\vert $ in the conditional measure.}
\end{figure}

If, on the other hand, we are interested in the subset $N^s := \{x\in \R^2: x_2 = s\cdot x_1\}$ (a straight line through the origin), then things turn out differently: For convenience we switch the order of the coordinates, i.e., 
\begin{align*}
\Phi(x) &= \begin{pmatrix}
\arccos \left(\frac{x_1}{\sqrt{x_1^2+x_2^2}}\right)\\
\sign(x_2)\cdot \sqrt{x_1^2+x_2^2} 
\end{pmatrix}
\end{align*}
such that $N^s = \{x\in \R^2: \varphi(x) = \arccos \frac{\operatorname{sign}(x_1)}{\sqrt{1+s^2}}\}$. Then 
 $D\varphi$ does not have constant norm on $N^s$ and the conditional measure induced by $\varphi = s$ is not the canonical measure: In particular,
 \[\mu_{\Phi,s}(\Phi^{-1}(\{s\}\times B)) = \frac{1}{C_s}\int_B f(u \cos s, u\sin s)\vert u\vert  \d u.\]

\section{Practical considerations on dealing with singular conditional probabilities}\label{sec:solution}
The initial promise of this manuscript was to give the reader a clearer idea of how to handle or avoid being bit by the Borel--Kolmogorov paradox. From the discussion so far we propose the following. 

\subsection{Conditioning on a set versus conditioning on a random variable}
Let's assume that we want to condition on a singular set. The first question which readers have to answer for themselves is, ``Do I want to condition on a singular set or on a singular event given by a random variable?''.

For the example of section \ref{sec:guidingexample}, this means finding out whether we are interested in
\begin{enumerate}
    \item the distribution of $X$ on the singular set $\{(x_1,x_2) \st x_1+x_2=0\}$ or
    \item the distribution of $X$ given $V=c$ from some random variable $V$ and $c\in \R$ (with, e.g., $V=W$ or $V=Z$).
\end{enumerate}

In the first case, we propose to use the canonically induced measure. This can either be computed by definition (but the Hausdorff measure is notoriously difficult to handle explicitly) or by choosing a specific parametrization via a random variable for which the conditions of theorem \ref{thm:main} hold. Then the fan measure (i.e., the ``vanilla conditional distribution'' due to lemma \ref{lem:conditioning_on_rv}) can be computed and will correspond to the canonically induced measure.

In the second case, one should ignore the canonically induced measure and stick with the parametrization induced by the random variable $V$ (a more thorough motivation for that will be given in the next section).

\subsection{The relevance of the Borel--Kolmogorov paradox for Bayesian inverse problems}
The Borel--Kolmogorov paradox only arises when we condition on \textit{subsets} instead of random variables. This is also called the \textit{equivalent events fallacy}, because it makes a big difference what random variables we use to represent a given event.

On one hand, the Borel--Kolmogorov paradox is not directly related to Bayesian inverse problems: Bayes' theorem instructs us to always condition on the random variable which is measured without any geometric subset involved.
Still, the similarity between these two settings is too large to ignore and it pays off to be cautious.

We consider again the example from section~\ref{sec:guidingexample} with cartesian coordinates $(x_1, x_2)\in \R^2$ and a random variable $X = (X_1, X_2)\in \R^2$ in this space. 
We are interested in conditioning the random variable $X$ on the set $M:=\{(x_1, x_2) \st x_1 + x_2 = 0\} = \{(x_1, x_2) \st x_2/x_1 = -1 \}$. 
These two representations are of course equivalent, but lead to different conditional measures on $M$, as we saw in section~\ref{sec:guidingexample}.

The canonically induced measure, as it is defined in definition \ref{def:canonical_submeasure}, it is not straightforwardly computed, so in practice we condition on a different random variable. 
In section~\ref{sec:guidingexample} we compared two different conditioning procedures: 
We defined $(Z, \tilde X):= \Phi_1(X_1, X_2) := ({X_2}/{X_1}, X_1)$, and $(W, \tilde X) := \Phi_2(X_1, X_2) := (X_1+X_2, X_1)$ and identified $M$ with the singular events $\{Z = -1\}$ and $\{W = 0\}$. 
The paradox arises as soon as we use those random variables for an implicit transformation of the probability space.
Lemma~\ref{lem:conditioning_on_rv} shows that the distribution of $\tilde X = X_1$ given $Z=-1$ is the measure $\mu_{\Phi_1, -1}$ and the distribution of $\tilde X = X_1$ given $W = 0$ is the measure $\mu_{\Phi_2, 0}$.
It is easily calculated that $\vert \det (J\Phi_1^{-1}(y))\vert  = \vert y_2\vert $ and $\det (J\Phi_2^{-1}(y))\vert  = 1$. 
Hence, theorem~\ref{thm:main} implies that $\mu_{\Phi_2, 0}$ is the canonically induced measure, whereas $\mu_{\Phi_1, -1}$ is not.

Graphically, we can also make sense of this statement (see Figure~\ref{fig:N_and_M}). If we define $M^s := \{x\in \R^2: \Phi_1(x) = s\}$ and $N^s := \{x\in \R^2: \Phi_2(x) = s\}$, then $M = M^{-1} = N^0$.
The characterization $M^s$ implies approximation by cones of vanishing angle (and thus a non-uniform approximation). This is due to the fact that cones do not have constant distance from each other. The characterization $N^s$ approximates by ``concentric'' tubes of vanishing width and thus approximates uniformly.
\begin{figure}[hbtp]
\centering
\includegraphics[width=0.45\textwidth]{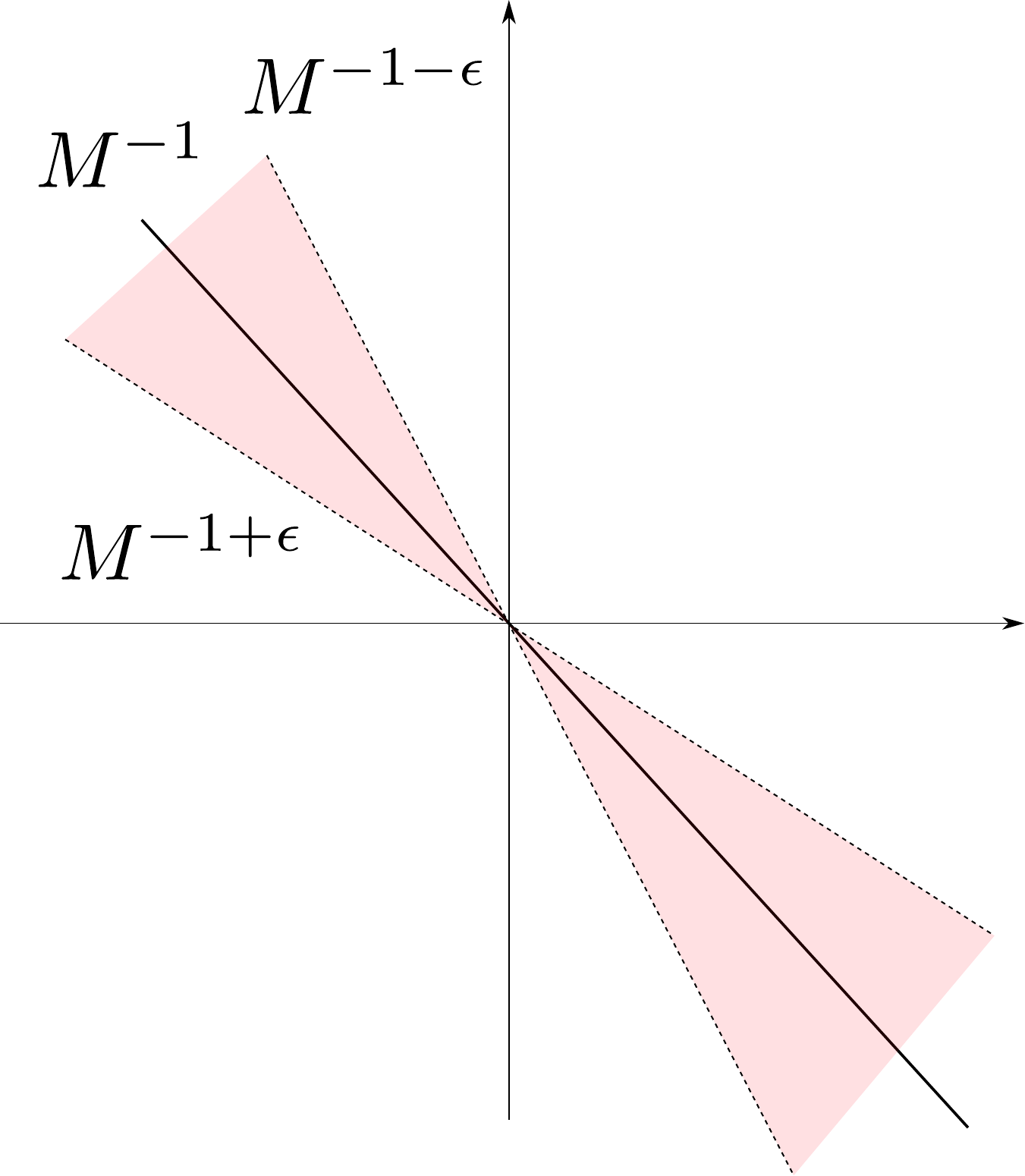}\hfill
\includegraphics[width=0.45\textwidth]{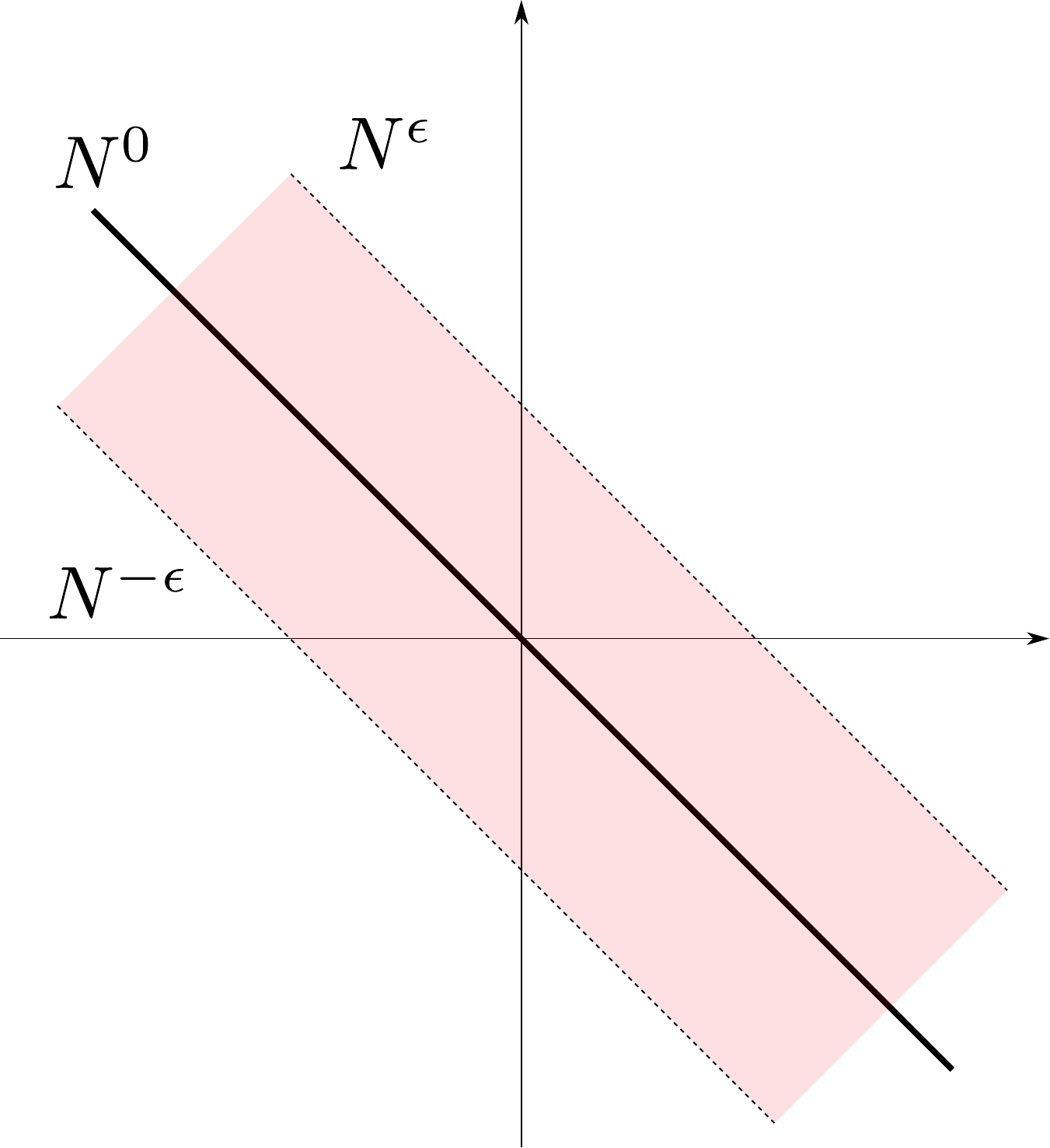}
\caption{Non-uniform and uniform approximation of the set $M=\{x_1+x_2=0\}$}
\label{fig:N_and_M}
\end{figure}

This seems to suggest that even if the measured data is given by 
\[ Z = \frac{X_2}{X_1} = -1,\]
one needs to forget the physical measurement $Z$ and instead condition on the entirely synthetic event $W = X_1 + X_2 = 0$ in order to obtain a ``canonical conditional probability'' on the event $M$. 
This seems strange or even wrong and one may wonder how canonical $\mu_M$ really is. 

We will see that this apparent contradiction arises from the definition of the probability space and can be corrected.
To see this reformulate the two conditioning procedures discussed above as follows:
\begin{enumerate}[A)]
\item Let $X = (X_1,X_2)$ be a $N(0,1)\otimes N(0,1)$ random variable. What is the distribution of $X_1$ given that $X_1 = -X_2$?
\item Let $X_1 \sim N(0,1)$ be a parameter and $X_2 \sim N(0,1)$ be a noise vector. What is the distribution of $X_1$ given we measure $Z = \frac{X_2}{X_1}$ to be $Z=-1$?
\end{enumerate}

Formulation~A is clearly a situation where we can apply our notion of canonically induced submeasure (because we condition on a set) and from our calculations above we derive that the correct conditional distribution of $X_1$ is given by $X_1\vert (W=0)$ with $W = X_1 + X_2$.

Formulation~B differs only in the interpretation of the quantities involved: $X_1, X_2$ and $W$ are given clearer meaning. 
In particular, we do not condition on a geometric set (like $\{(x_1, x_2): x_1 = -x_2\}$) but on a measurement event, which makes all the difference.
The correct notion of conditional probability in this case \textit{is} the version conditioned on $Z = -1$, although our preceding calculation implies that this does not agree with the canonically induced submeasure. 

The issue here is the following: 
The example above starts with the probability space $(\R^2, \B(\R^2), \mu=\mu_0\otimes \gamma)$ on the coordinates $X_1$ and $X_2$.
Here $\mu_0$ is the prior measure on $X_1$ and $\gamma$ is the ``noise'' measure on $X_2$, if we interpret $X_1$ as a parameter and $X_2$ as a noise term. 
In particular, the two variables are independent. 

In formulation A, this constitutes the canonical choice for the fundamental probability space: $Z$ (and $W$, if it is defined) are only artificial quantities, but $X_1$ and $X_2$ are the fundamental variables with respect to which $\mu$ is defined.

In formulation B, the Bayesian inverse problem, we argue that the correct tuple for constructing the fundamental probability space is $(X_1, Z)$:
Even without noise a measurement process is never exact, e.g., because measurement instruments use binning or can only be read of with limited accuracy.

In other words, a measurement in the context of a Bayesian inverse problem is never actually $Z = -1$, but rather $Z \in (-1-\varepsilon,-1+\varepsilon)$ with $\varepsilon$ a read-off (or digitalisation) uncertainty. 
Hence, conditioning $X_1$ on the singular case $Z=-1$ should arise as a limit of conditioning $X_1$ on the non-singular sets $(-1-\eps,-1+\eps)$: 
From the proof of lemma \ref{lem:conditioning_on_rv} we see that this corresponds to the usual conditional probability distribution $X_1\vert (Z=-1)$. 
Starting with the probability space $(\R^2, \B(\R^2), \mu=\mu_0\otimes \gamma)$, we obtain $Z$ only via the transformation $Z = \varphi(X_1, X_2) = {X_2}/{X_1}$ which leads to the apparent contradiction between canonical conditional probability and ``what we actually want''. 

In contrast to the Borel--Kolmogorov paradox, where there are no distinguished coordinates, in an Bayesian inverse problem we have more structure: 
While there are two degrees of freedom, the parameter $X_1$ and the noise $X_2$, together with the data $Z$ they constitute three dependent variables and a probability space can be defined using any two of them. 
When we choose $(X_1, Z)$ as the elementary coordinates of the probability space, and the probability measure on those variables as push-forward of $\mu$ via the mapping $\Phi$ (in the sense of section \ref{sec:resolutionBK}), then the apparent contradiction vanishes: The customary conditional probability (i.e., plain conditioning to the random variable $Z$) coincides with the canonical conditional probability distribution, because there is no additional transformation map (we condition on the second coordinate).

In order to obtain a non-overdetermined description, we have to choose a pair of random variables out of the triple $(X_1, X_2, Z)$ and use this coordinate description to define the set we want to condition on, i.e., $M = \{Z=z\}$. If we choose $(X_1, X_2)$, we get a measure which is independent in its coordinates (because prior and noise are usually assumed to be independent), but then the canonical measure on $M$ does not correspond to the classical conditional measure given $Z=z$. On the other hand, if we choose $(X_1, Z)$, then the canonical measure on $M$ is identical to the classical conditional measure.

As a quick synopsis of this section we give the following proposition that describes how to reconcile Bayesian inverse problems with the Borel--Kolmogorov paradox.
\begin{proposition}[Bayesian inverse problems and the Borel--Kolmogorov paradox]\label{lem:BIP_BK}
Consider a Bayesian inverse problem of recovering an unknown vector $\mathcal X_1 \ni X_1\sim \mu_{X_1}$ (where $\mu_{X_1}$ is the prior measure on $X_1$, i.e., the law of $X_1$)  from a known measurement 
\[ \mathcal Z \ni Z = G(X_1, X_2)\]
with independent noise $\mathcal X_2 \ni X_2\sim \mu_{X_2}$. The product measure is defined as $\nu := \mu_{X_1}\otimes \mu_{X_2}$. Then $\nu$ and $G$ define a probability space $\mathcal Y = (\mathcal X_1\times \mathcal Z, \sigma(\mathcal X_1\times \mathcal Z), \mu)$ where $\mu$ is the push-forward measure $\mu(C) = \hat G_\#\nu(C)$ with $\hat G(x_1,x_2) = (x_1, G(x_1, x_2))$.

For concreteness, we say that the known measurement of $Z$ is given by $s$. We define $M := \{(x_1, z)\in \mathcal Y: z = s\}$.

In this probability space $\mathcal Y$, the posterior probability distribution of $X_1\vert {Z=s}$ with density $f_{X_1\vert Z=s}(x_1) := \frac{f_{X_1,Z}(x_1,s)}{f_Z(s)}$ coincides with the canonically induced measure $\mu_M$.
\begin{proof}
Define $\varphi(x_1,z) = z$ and $\psi(x_1,z) = x_1$, then the set $M$ is also described by $\varphi^{-1}(\{s\})$. 
The fan measure $\mu_{\Phi, s}$ is plain conditioning to the second component and thus corresponds to the usual conditional measure with density $f_{X_1\vert Z=s}(x_1)$ according to lemma \ref{lem:conditioning_on_rv}. On the other hand, $D\varphi \equiv 1$ on $M$, and thus by theorem \ref{thm:main}, it is identical to the canonical measure $\mu_M$.
\end{proof}
\end{proposition}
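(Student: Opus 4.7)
The plan is to reduce this proposition to a direct application of Theorem~\ref{thm:main} (together with Lemma~\ref{lem:conditioning_on_rv}). The key observation is that once we have moved to the probability space $\mathcal Y$ in coordinates $(x_1,z)$, the conditioning event $M=\{z=s\}$ is a coordinate hyperplane, so the ``parametrization map'' identifying it with a level set can be taken to be the projection onto the second coordinate. Because projections have constant derivative, the hypotheses of Theorem~\ref{thm:main} are essentially free.

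Concretely, first I would set $\varphi:\mathcal X_1\times\mathcal Z\to\mathcal Z$, $\varphi(x_1,z)=z$, and $\psi:\mathcal X_1\times\mathcal Z\to\mathcal X_1$, $\psi(x_1,z)=x_1$, so that $\Phi=(\varphi,\psi)$ is (up to coordinate reordering) the identity on $\mathcal Y$, and $M=\varphi^{-1}(\{s\})=\Phi^{-1}(\{s\}\times\mathcal X_1)$. Next, I would invoke Lemma~\ref{lem:conditioning_on_rv} to identify the fan measure $\mu_{\Phi,s}$ with the classical conditional distribution of $\psi(X_1,Z)=X_1$ given $\varphi(X_1,Z)=Z=s$; since $\mu$ is the push-forward of the product $\nu=\mu_{X_1}\otimes\mu_{X_2}$ under $\hat G$, the joint and marginal densities $f_{X_1,Z}$ and $f_Z$ used to form $f_{X_1\mid Z=s}$ are exactly the densities computed in $\mathcal Y$, so the lemma gives
\begin{align*}
\mu_{\Phi,s}(A) \;=\; \P(X_1\in B\mid Z=s) \;=\; \int_B f_{X_1\mid Z=s}(x_1)\,\d x_1
\end{align*}
for $A=\Phi^{-1}(\{s\}\times B)$.

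Then I would check the three hypotheses of Theorem~\ref{thm:main}. Since $\varphi$ is the linear projection onto the $z$-coordinate, its differential is the constant row vector $(0,\ldots,0,1)$; hence $J\varphi=\sqrt{\det(D\varphi\,D\varphi^T)}\equiv 1$ on all of $\mathcal Y$ (in particular on $M$), the smallest singular value of $D\varphi$ equals $1$ on any neighborhood $U$ of $M$, and $J\varphi$ is trivially $C^1$. Theorem~\ref{thm:main} then yields $\mu_{\Phi,s}(A)=\mu_M(A)$ for almost all $s$, and combining with the previous identification shows that $f_{X_1\mid Z=s}$ is precisely the density of the canonically induced measure $\mu_M$.

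The main thing to be careful about is ensuring that the push-forward measure $\mu$ on $\mathcal Y$ actually satisfies the standing regularity hypotheses used in Definitions~\ref{def:canonical_submeasure} and~\ref{def:fan_measure} and in Theorem~\ref{thm:main} (absolute continuity with lower semicontinuous density, and strict positivity so that Lemma~\ref{lem:conditioning_on_rv} applies). In the generic Bayesian setup this is an implicit assumption that the prior $\mu_{X_1}$, the noise law $\mu_{X_2}$, and the forward map $G$ are regular enough for $\hat G_\#\nu$ to admit a strictly positive, continuous density $f_{X_1,Z}$; I would simply record this as the standing assumption and then the statement follows. No genuinely hard step is required beyond being explicit about these regularity conditions and the computation $D\varphi\equiv(0,\ldots,0,1)$.
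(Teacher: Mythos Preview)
Your proposal is correct and follows essentially the same route as the paper: choose $\varphi(x_1,z)=z$, $\psi(x_1,z)=x_1$, use Lemma~\ref{lem:conditioning_on_rv} to identify $\mu_{\Phi,s}$ with the posterior density $f_{X_1\mid Z=s}$, then observe $D\varphi$ is constant and apply Theorem~\ref{thm:main}. Your version is in fact a bit more thorough---you spell out all three hypotheses of Theorem~\ref{thm:main} and flag the implicit regularity assumptions on $\mu=\hat G_\#\nu$---whereas the paper just records $D\varphi\equiv 1$ and invokes the theorem.
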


\begin{appendix}
\section*{Appendix: A discussion of \cite{gyenis2017conditioning}}
In this section we discuss an approach by Gyenis, Hofer-Szabo and Redei, and we demonstrate with a counterexample that it cannot be used to circumvent the Borel--Kolmogorov paradox. The idea of \cite{gyenis2017conditioning} is to reduce the question of singular conditioning to the following concept of statistical inference:

Let $(X,\mathcal S, p)$ be a probability space, $\calA$ a sub-$\sigma$-algebra of $\mathcal S$. Assume that $\psi_A$ is a $\|\cdot\|_1$-continuous linear functional on $\mathcal L^1(X,\calA, p_{\calA})$ determined by a probability measure $q_{\calA}$ given on $\cal A$ via
\[\psi_{\cal A}(f) = \int f\d q_{\calA}.\]
What is the extension $\psi$ of $\psi_{\calA}$ from  $\mathcal L^1(X,\calA, p_{\calA})$ to a $\|\cdot\|_1$-continuous linear functional on  $\mathcal L^1(X,\mathcal S, p)$?

The answer proposed by \cite{gyenis2017conditioning} is to define
\[\psi(f) := \psi_{\calA}(\calE(f|\calA)),\]
where $\calE(f|\calA)$ is the conditional expectation of $f$ with respect to the $\sigma$-algebra $\calA$. Then the measure $q(B):=\psi(\chi_B)$ is an extension of $q_{\calA}$ on the whole probability space $(X,\mathcal S, p)$.

Now the question of singular conditioning boils down to choosing $\calA = \{\emptyset, A, A^c, X\}$, setting $q_{\calA}(A) = 1$, $q_{\calA}(A^c) = 0$, and defining the extension $q$ as the $A$-conditional probability measure on $X$.

We show that this unfortunately does not work if $A$ is an atom with respect to the probability measure $q$, by way of a concrete example in one dimension:

Our underlying probability space is $([0,1], \mathcal B([0,1]), \lambda)$ with $\lambda$ the Lebesgue measure. We first condition wrt to $\A := \{\emptyset, \{0\}, (0,1], [0,1]\}$. 

\paragraph{The conditional expectation.}
By the measurability requirement, any version of conditional expectation is of the form
$ \calE(f\vert \A) = \chi_{\{0\}}\cdot C_1 + \chi_{(0,1]}\cdot C_2.$ The integration criterion yields $C_2 = \int_0^1 f(x)\d x$ but cannot resolve the value of $C_1$ (because $\{0\}$ is a 0-set of $\lambda$). This means that 
\[\calE(f\vert \A) = C\cdot \chi_{\{0\}} +  \int_0^1 f(x)\d x  \cdot \chi_{(0,1]}\]
is a valid choice for conditional expectation for any $C\in  \R$.

\paragraph{Choosing a measure $q_\A$ on $\A$.}
We set $q_\A(\{0\}) = \rho$ and $q_\A((0,1]) = 1-\rho$ for arbitrary but fixed $\rho \in [0,1]$. Now assume a $\A$-measurable function $g$, necessarily of the form $g = C_1\cdot \chi_{\{0\}} +  C_2 \cdot \chi_{(0,1]}$. Then the linear functional $\psi_\A$ applied to this is defined as 
\[\psi_\A(g) = \rho\cdot C_1 + (1-\rho)\cdot C_2.\]

\paragraph{Extending the measure $q_\A$ to a measure $q$ on $\mathcal B([0,1])$.} We define the extension of the linear functional by $\psi(f) = \psi_\A(\calE(f\vert \A)$ for any $f$ which is Borel-measurable. This is
\[\psi(f) = \rho\cdot C + (1-\rho)\cdot \int_0^1 f(x)\d x.\]
The extension of the measure is then defined as $q(B) = \psi(\chi_B)$.

\paragraph{When is $\psi$ a proper extension of $\psi_\A$?}
We need to derive conditions such that $q(\{0\}) = q_\A(\{0\})$ and $q((0,1]) = q_\A((0,1])$. Concerning the first set, write $A = \{0\}$. Then 
\[q(A) = \psi(\chi_A) = \psi_\A(\calE(\chi_A\vert \A)) = \rho\cdot C + (1-\rho) \cdot 0 = \rho\cdot C.\]
If this is supposed to be equal to $q_\A(A) = \rho$, we need $C = 1$.

Secondly, write $A^c = (0,1]$. Then (using $C=1$)
\[q(A^c) = \psi(\chi_{A^c}) = \rho\cdot C + (1-\rho) = 1.\]
But this is only equal to $q_\A(A^c) = 1-\rho$, if $\rho = 0$, i.e. if $\{0\}$ has $q_\A$-measure $0$ and $q_\A$ is thus necessarily absolutely continuous with respect to $p = \lambda$. This is in contradiction to our assumption that $q_{\A}(A) = 1$ and $q_\A(A^C) = 0$.

\paragraph{Comparison to remark 4 in the paper} The paper states in remark 4 that $\psi$ will be an extension of $\psi_\A$ if $p(A) = 0$ (this is the case here) and $q_\A(A) = 1$. But if we enforce this condition (which means that $\rho = 1$), then from above we know that $q(A^c)\neq q_\A(A^c)$ and thus $q$ is not an extension of $q_\A$.

The reason why things break down here is the following: On the one hand, from above,
\begin{displaymath}
\calE(\chi_{(0,1]}\vert \A) = \chi_{(0,1]} + C \cdot \chi_{\{0\}}.
\end{displaymath}
On the other hand, $\chi_{(0,1]}$ is $\A$-measurable, i.e. is unchanged by conditional expectation and 
\[ \calE(\chi_{(0,1]}\vert \A) = \chi_{(0,1]}.\]
This makes sense, because conditional expectation is only define up to $p$-zero-sets (of which $\{0\}$ is one. But this ambiguity now makes a huge difference because $q_\A$ poses a non-zero probability here: 
\[q((0,1]) = \psi(\chi_{(0,1]}) = C\rho + 1-\rho\]
and as every value of $C$ is equally valid, there is no canonical extension of $q_\A$. Even if we single out a version by setting $C = 0$ (i.e. such that $q((0,1]) = q_\A((0,1]) = 1-\rho$), then the other set makes problems:
\[q(\{0\}) = \psi(\chi_{\{0\}}) = (1-\rho)\cdot \int_0^1 \chi_{\{0\}}(x)\d x = 0\]
which is a violation of the requirement $q(\{0\}) = q_\A(\{0\})$.

If we set $C=1$, then $q(\{0\}) = q_\A(\{0\})$ but $1 = q((0,1]) \neq q_\A((0,1]) = 1-\rho$. Hence we can never choose a consistent version of conditional expectation (which is not pointwise defined anyway) such that we can extend $q_\A$ to $q$. This is because the conditional expectation has an arbitrary value on a set  ultimately due to the fact that $q_\A$ is not absolutely continuous wrt $p$.

We can generalize this example:
\begin{lemma}
Let $(X, \S, p)$ be a probability space and $\A$ be a sub-$\sigma$-field of $\S$ and $q_\A$ a probability measure on $(X, \A)$. Define the $\Vert \cdot\Vert _1$-continuous linear functional $\psi_\A$ defined via $q_\A$, i.e.
\[ \psi_\A(g) = \int_X g \mathrm d q_\A.\]
Assume that $q_\A$ is not absolutely continuous wrt $p$. Then there is no consistent extension of $q_\A$ to all of $q$ via
\[q(B) = \psi(\chi_B) ~\dot =~ \psi_\A (\calE(\chi_B\vert \A)). \]
\end{lemma}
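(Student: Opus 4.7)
The plan is to locate the ambiguity in the formula $q(B)=\psi_\A(\calE(\chi_B\mid\A))$ and exhibit a direct source of inconsistency, exploiting the fact that $\calE(\chi_B\mid\A)$ is determined only up to $p|_\A$-a.e.\ equivalence while $\psi_\A$ integrates against $q_\A$, which by hypothesis assigns positive mass to some $p|_\A$-null set.

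First I would use the failure of absolute continuity to produce a set $A\in\A$ with $p(A)=0$ but $q_\A(A)>0$; this is exactly the negation of $q_\A\ll p|_\A$. Next, for any $B\in\mathcal S$ and any version $g$ of $\calE(\chi_B\mid\A)$, I would check that $g+c\,\chi_A$ is again a valid version for every $c\in\R$. Indeed, $g+c\,\chi_A$ is $\A$-measurable, and for every $C\in\A$ one has $\int_C c\,\chi_A\,\d p = c\,p(A\cap C)=0$, so the defining identity $\int_C g\,\d p=\int_C\chi_B\,\d p$ is preserved. Applying $\psi_\A$, however, gives
\[\psi_\A(g+c\,\chi_A)=\psi_\A(g)+c\,q_\A(A),\]
which depends nontrivially on $c$ since $q_\A(A)>0$. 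Hence the prescription $q(B)=\psi_\A(\calE(\chi_B\mid\A))$ does not specify a single real number: different equally valid representatives produce different candidate values of $q(B)$.

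The final step is to translate this ambiguity into the stated non-existence of a consistent extension. The main obstacle here is conceptual rather than computational: one needs to pin down what ``consistent extension'' means, since the formula itself is multivalued. I would argue as follows: any canonical reading that assigns to $\chi_B$ a single representative of $\calE(\chi_B\mid\A)$ and then integrates against $q_\A$ already fails to be well-defined modulo the equivalence class used to define conditional expectation, as shown above. If instead one attempts a case-by-case choice of representative, one runs into the same incompatibility already exhibited in the one-dimensional computation preceding the lemma: enforcing $q|_\A=q_\A$ on the pair $\{A,A^c\}$ forces mutually incompatible values for the free constant on $A$, because $q(A)=q_\A(A)>0$ and $q(A^c)=q_\A(A^c)$ cannot both be matched while maintaining $q(A)+q(A^c)=1$. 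In either reading, the formula fails to produce a consistent extension, which is the content of the lemma.
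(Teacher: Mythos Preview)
Your argument is essentially the paper's: choose $A\in\A$ with $p(A)=0$ but $q_\A(A)>0$, observe that any version $g$ of $\calE(\chi_B\mid\A)$ may be replaced by $g+c\,\chi_A$, and conclude that $\psi_\A$ applied to different versions returns different numbers; the paper carries this out for the two specific inputs $B=A$ and $B=A^c$, obtaining that the free constant on $A$ is forced to equal $1$ and $0$ respectively. One wording slip: your closing clause ``$q(A)=q_\A(A)>0$ and $q(A^c)=q_\A(A^c)$ cannot both be matched while maintaining $q(A)+q(A^c)=1$'' is not the obstruction, since those three constraints are mutually compatible ($q_\A$ is a probability measure); the actual contradiction is the one you state just before, namely that a \emph{single} choice of the free constant on $A$ cannot serve both computations, which is exactly how the paper phrases it.
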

\begin{proof}
Consider a set $A\in \A$ such that $q_\A(A) > 0$ but $p(A) = p_\A(A) = 0$. Then for consistency we need $q(A) = q_\A(A)$, hence we compute
\begin{align*}
q(A) &= \psi(\chi_A) = \psi_\A (\calE(\chi_A\vert \A)) \\
\intertext{Now $\chi_A$ is $\A$-measurable, and thus $\calE(\chi_A\vert \A) = \chi_A$. But $\calE(\cdot\vert \A)$ is only defined up to sets of $p$-measure $0$, hence $\calE(\chi_A\vert \A) = C\cdot \chi_A$ are valid versions for all values of $C$}
	&= \psi_\A(C\cdot \chi_A) = C\cdot q_\A(A) \stackrel{!}= q(A)
\end{align*}
(where the last equality is necessary for consistency of the two measures), and thus we need to choose $C=1$.

On the other hand, 
\begin{align*}
q(A^c) &= \psi(\chi_{A^c})= \psi_\A (\calE(\chi_{A^c}\vert \A)) \\
\intertext{(again, $A^c$ is $\A$-measurable but we need to account for arbitrariness in $A$)}
	&= \psi_{\A}(C\cdot \chi_A + \chi_{A^c}) = C\cdot q_\A(A) + q_\A(A^c) \stackrel ! = q(A^c)
\end{align*}
and thus we need to choose $C=0$. Hence even if we could ``nail down'' the conditional expectation on the set $A$ by setting the constant $C$ (which we cannot), there is no consistent way of doing so.
\end{proof}

\end{appendix}

\printbibliography

\end{document}